\documentclass{article}
\usepackage{graphicx} 
\usepackage{longtable}
\usepackage{booktabs} 
\usepackage{amsmath, amssymb, amsfonts, amsthm, mathrsfs}

\usepackage{graphicx}
\usepackage{subcaption}
\usepackage{booktabs}
\usepackage{float}
\usepackage{afterpage}
\usepackage[hidelinks]{hyperref}
\usepackage{bookmark}
\usepackage{xcolor}
\usepackage[capitalize]{cleveref}

\usepackage{enumitem}
\usepackage{geometry} 
\usepackage{natbib}

\newtheorem{theorem}{Theorem}[section]
\newtheorem{lemma}[theorem]{Lemma}

\newtheorem{definition}[theorem]{Definition}

\theoremstyle{definition}  
\newtheorem{example}{Example}[section]
\newtheorem{remark}{Remark}
\theoremstyle{plain}  
\newtheorem*{assumption}{Assumption}

\title{Onsager--Machlup Functional for Fractional Stochastic Newton Dynamics with Time-Dependent Noise Intensities}

\author{
Yanbin Zhu%
\thanks{This work is supported by the National Key R\&D Program of China (No.~2023YFA1009200), the National Key Project of the National Natural Science Foundation of China (Grant No.~12531009), and the National Natural Science Foundation of China (Grant Nos.~12471183, 12071175).}
\and
Xiaomeng Jiang%
\thanks{Corresponding author.}
\and
Yong Li
}

\date{}

\begin{document}
\maketitle

\begin{center}
College of Mathematics, Jilin University, Changchun 130012, PR China
\end{center}

\begin{center}
\texttt{zhuyb23@mails.jlu.edu.cn},
\texttt{jxmlucy@hotmail.com},
\texttt{liyong@jlu.edu.cn}
\end{center}

\begin{abstract}
In this paper, we derive the Onsager--Machlup functional for a second-order Newton-type stochastic system driven by time-dependent fractional noise,
\[
    X_t'' = f_t(X_t, X_t') + \sigma_t \,\xi_t^{H},
\]
where \( H \in (1/4,1) \). The analysis relies on applying a Girsanov transformation to the non-degenerate components and evaluating the limiting conditional expectation associated with the noise term, for which the stochastic Fubini theorem plays a crucial role. To illustrate the applicability of the result, we study two mechanical systems perturbed by noise and provide supporting numerical simulations.
\end{abstract}

\medskip
\noindent\textbf{Keywords.}
Fractional Brownian motion; Onsager--Machlup functional; Stochastic mechanical systems.

\medskip
\noindent\textbf{MSC (2020).}
60H10; 60G22; 60F10.

\allowdisplaybreaks

\section{Introduction}
The second-order Newton equation
\begin{equation*}
    X''_t = f_t(X_t, X'_t)
\end{equation*}
is a cornerstone of classical mechanics, governing the motion of systems ranging from celestial bodies to microscopic particles (see \cite{Arnold1978}). Classical examples include the pendulum equation
\[
X''_t = -\gamma X_t + \sin(X_t),
\]
and the Duffing equation
\[
X''_t + \delta X'_t + \alpha X_t + \beta X_t^3 = \gamma \cos(\omega t),
\]
both of which hold significant importance in both mathematical theory and its applications.

In many physical settings, Newtonian systems are subject to random perturbations arising from environmental fluctuations. Such effects are commonly modeled by incorporating white noise into the governing equations, leading to a stochastic second-order dynamics of the form
\[
X_t'' = f_t\bigl(X_t, X_t'\bigr) +  \xi_t,
\]
where $\xi_t$ represents a fluctuating random force.

The dynamical behavior of such noise-perturbed Newtonian systems was first introduced by Paul Langevin in his seminal 1908 work~\cite{Langevin1908}, and has since been extensively developed in both physics and applied mathematics.

In this work, we investigate the case of time-dependent fractional noise described by
\begin{equation}
    X''_t = f_t(X_t, X'_t) + \sigma_t \, \xi^H_t, \quad (X_0, X'_0) = (x_0, y_0), \label{fir}
\end{equation}
where \(\xi^H_t\) denotes fractional noise with Hurst index \(H\). Compared to the classical setting, this formulation can capture systems subject to noise perturbations with both time dependence and memory effects.

By introducing the auxiliary variable \(Y_t = X'_t\), equation \eqref{fir} can be rewritten equivalently as the following two-dimensional SDE with degenerate noise:
\begin{equation}
\label{sec}
\begin{cases}
dX_t = Y_t \, dt, \\[4pt]
dY_t = f_t(X_t, Y_t) \, dt + \sigma_t \, dB^H_t,
\end{cases}
\end{equation}
subjected to the initial condition \((X_0, Y_0) = (x_0, y_0)\).

For stochastic dynamical systems, a central problem is to characterize their most probable transition paths, which play a crucial role in understanding transition phenomena in fields such as chemical kinetics and non-equilibrium statistical mechanics \cite{VandenEijnden2010}. The Onsager-Machlup (OM) functional offers a powerful framework for this purpose. Given a path \(\psi\), if the limit
\begin{equation}
\label{omf}
    \exp\bigl(J(\psi)\bigr) = \lim_{\varepsilon \to 0} 
    \frac{\mathbb{P}\bigl(\| X_{\cdot} - \psi_{\cdot} \|_{1+\beta} \leq \varepsilon\bigr)}
         {\mathbb{P}\bigl(\| \textstyle\int_0^{\cdot} \sigma_s \, dB^H_s \|_\beta \leq \varepsilon\bigr)}
\end{equation}
exists, where \(\|\cdot\|_{\beta}\) denotes a suitable Hölder norm (see Section~2 for precise definitions), then \(J\) is called the OM functional associated with equation \eqref{fir}. Intuitively, this functional describes the probability that the system solution \(X_t\) lies within a tubular neighborhood of a deterministic path \(\psi_t\) as \(\varepsilon\) approaches zero. Its extremal paths correspond to the most probable transition pathways of the stochastic dynamics.

The OM functional was first introduced by Onsager and Machlup \cite{Onsager1953} in 1953. It characterizes the most probable paths of diffusion processes and serves as an analogue to the Lagrangian in dynamical systems, describing the optimal trajectory of a particle along a given path. This theory has  been widely applied to compute the most probable reaction paths from reactant to product states, see \cite{BATTEZZATI2013163,10.1063/1.467139}.

For SDEs driven by fractional noise, the Onsager--Machlup functional has been studied by Liu and Gao\cite{Liu2026}, 
and related results for time-varying fractional noise have been investigated by the authors recently.  However, the system \eqref{sec} presents new challenges due to the degeneracy of the noise and the indirect influence of the noise on certain stochastic integral terms. Specifically, the standard Girsanov transformation cannot be applied directly, and the additional term arising from Taylor expansions requires careful treatment.

To address these challenges, we apply the Girsanov transformation solely to the non-degenerate component \(Y_t\) and characterize the distance between \(X_t\) and \(\psi_t\) in terms of their derivatives. For the additional term, we employ the stochastic Fubini theorem together with careful estimates of trace-type terms.

Our main results (Theorems~\ref{result1}, \ref{result2}, and \ref{result3}) provide explicit formulas for the OM functional in three distinct regimes:
\[
J(\psi) =
\begin{cases}
-\dfrac{1}{2}\displaystyle\int_0^1 
\Big(
\dot{\phi}_s
- s^{\alpha} D_{0^+}^\alpha s^{-\alpha} 
\sigma_s^{-1} f_s(\psi_s,\phi_s)
\Big)^2 
+ d_H \partial_y f_s(\psi_s,\phi_s) \, ds, 
& \tfrac{1}{2}<H<1, \\[2.2em]
-\dfrac{1}{2}\displaystyle\int_0^1 \frac{
\big(
\phi_s'
- f_s(\psi_s,\phi_s)
\big)^2}{\sigma_s^2} 
+ \partial_y f_s(\psi_s,\phi_s) \, ds,
& H=\tfrac{1}{2}, \\[2.2em]
-\dfrac{1}{2}\displaystyle\int_0^1 
\Big(
\dot{\phi}_s
- s^{-\alpha} I_{0^+}^\alpha s^{\alpha} 
\sigma_s^{-1} f_s(\psi_s,\phi_s)
\Big)^2 
+ d_H \partial_y f_s(\psi_s,\phi_s) \, ds,
& \tfrac{1}{4}<H<\tfrac{1}{2},
\end{cases}
\]
where \(\alpha = H - 1/2\), \(d_H\) is a constant depending on \(H\), and the velocity of a path \(\psi\) is denoted by \(\phi_t = \psi_t'\). Moreover, the functions $\phi$ and $\dot{\phi}$ satisfy $ 
    \phi(\cdot) - y_0 = K_H^\sigma(\dot{\phi})(\cdot),$ 
where $K_H^\sigma$ is an integral operator of order $H+\tfrac{1}{2}$, which will be defined later. Moreover, in the case $H=\tfrac{1}{2}$, we have $
    \dot{\phi}(\cdot) = \frac{\phi(\cdot)}{\sigma(\cdot)}.$

These formulas extend the classical Onsager–Machlup theory to fractional stochastic Newtonian dynamics with time-dependent noise intensities, thereby establishing a foundation for computing the most probable transition paths in applications.

The remainder of this paper is organized as follows. Section 2 introduces the notations and preliminary results. In Section 3, we present the main results concerning the OM functional, together with its application to  stochastic pendulum equation,  stochastic Duffing equation, and accompanying numerical simulations. Finally, Section 4 provides the proofs of the main results.

\section{Preliminaries}
In this section, we recall some basic notations, assumptions, and lemmas that will be used in the sequel.
\subsection{Function Spaces and Norms}
    Let $0<\beta\le 1$. We denote by $C^\beta([0,1])$ the space of continuous functions
$f:[0,1]\to\mathbb{R}$ such that
\begin{equation}\label{eq:Holder-seminorm}
    [f]_\beta := \sup_{0\le s<t\le 1} \frac{|f(t)-f(s)|}{|t-s|^\beta} < \infty.
\end{equation}
We equip $C^\beta([0,1])$ with the norm
\begin{equation*}
    \|f\|_{C^\beta([0,1])} := \sup_{t\in[0,1]} |f(t)| + [f]_\beta.
\end{equation*}

Furthermore, we define
\begin{equation*}
    C^\beta_0([0,1]) := \{ f\in C^\beta([0,1]) : f(0)=0 \}.
\end{equation*}
On this subspace we use the norm
\begin{equation*}
    \|f\|_\beta := [f]_\beta.
\end{equation*}
It is straightforward to check that $\|\cdot\|_\beta$ is indeed a norm on
$C^\beta_0([0,1])$, and that it is equivalent to the restriction of 
$\|\cdot\|_{C^\beta([0,1])}$ to $C^\beta_0([0,1])$.
Further, for \(1 < \beta < 2\), we can define the norm of \(C_0^{\beta}([0,1])\) as  
\[
\|f\|_{\beta} := [f']_{\beta-1}.
\]

\subsection{Fractional Calculus}

We first introduce the basic concepts of fractional calculus. For further details, we refer to \cite{Samko1993}.
 \begin{definition}
   	Let $f\in L^1([a,b])$. The integrals 
   	\begin{align*}
   		(I_{a^+}^\alpha f)(x)&:=\frac{1}{\Gamma(\alpha)}\int_a^x (x-y)^{\alpha-1}f(y)dy,\quad x\geq a,\\
   		(I_{b^-}^\alpha f)(x)&:=\frac{1}{\Gamma(\alpha)}\int_x^b (y-x)^{\alpha-1}f(y)dy,\quad x\leq b,
   	\end{align*}
   	where $\alpha>0$, are respectively called the right and left Riemann--Liouville fractional integrals of order $\alpha$.

   \end{definition}
   
   For any $\alpha\geq 0,$ any $f\in L^p([a,b])$ and $g\in L^q([a,b])$ where $1/p+1/q\leq \alpha,$ we have:
   \begin{equation}
   	\int_a^b f(s)(I_{a^+}^\alpha g)(s)ds=\int _a^b (I_{b^-}^\alpha f)(s) g(s) ds.\label{ffubini}
   \end{equation}
   
   	
   	If $1 \leq p < \infty$, we denote by $I_{a^+}^\alpha(L^p)$ the image of $L^p([a,b])$ under the operator $I_{a^+}^\alpha$. Similarly, $I_{b^-}^\alpha(L^p)$ can be defined.

   	\begin{definition}
   		 Let $f\in I_{a^+}^\alpha(L^p) $, $g\in I_{b^-}^\alpha(L^p) $.
   		 Each of the expressions 
   		 \begin{align*}
   		(D_{a^+}^\alpha f)(x)&:=\left(\frac{d}{dx} \right)^{[\alpha]+1}I_{a^+}^{1+[\alpha]-\alpha}f(x),\\
   		(D_{b^-}^\alpha g)(x)&:=\left(-\frac{d}{dx} \right)^{[\alpha]+1}I_{b^-}^{1+[\alpha]-\alpha}g(x)
   	   	\end{align*}
   		 are respectively called the right and left fractional derivative.
   \end{definition}
  From \eqref{ffubini}, we deduce the formula  
\begin{equation}
  \int_a^b f(s)(D_{a^+}^\alpha g)(s)ds
  =\int _a^b (D_{b^-}^\alpha f)(s) g(s) ds,\quad 0<\alpha<1, \label{fffubini}
\end{equation}
which holds under the assumptions that \( f\in I^\alpha_{b^-}(L^p) \) and \( g\in I^\alpha_{a^+}(L^q) \) satisfying \( 1/p+1/q\leq 1+\alpha \).
   
	If $f\in I_{a^+}^\alpha(L^p) $, the function $ \phi $ such that $f=I_{a^+}^\alpha(\phi)$ is unique in $L^p $. Fractional derivatives can be regarded as the inverse operation of fractional integrals.
 
 When $\alpha p > 1$, any function in $I_{a^+}^\alpha(L^p)$ is 
$(\alpha - \tfrac{1}{p})$-H\"older continuous. Moreover, every H\"older continuous 
function of order $\beta > \alpha$ admits a fractional derivative of order 
$\alpha$; see \cite[Proposition~2.1]{Decreusefond1999}.

Although fractional derivatives are originally defined as derivatives of 
fractional integrals, they also admit an explicit representation in terms of 
Weyl’s formula (see \cite[Remark~5.3]{Samko1993}):
\begin{equation}
D^\alpha_{a^+} f(x) 
= \frac{1}{\Gamma(1-\alpha)} \left( \frac{f(x)}{(x-a)^\alpha} 
+ \alpha \int_a^x \frac{f(x) - f(y)}{(x-y)^{\alpha+1}}\,dy \right), 
\quad 0<\alpha<1,
\label{Weyl}
\end{equation}
where the improper integral converges in the $L^p$ sense. Therefore, if \( f \) has H\"older continuity with an exponent strictly greater than \( \alpha \),  the fractional derivative of order \( \alpha \) exists.

  	For the case of the right fractional derivative, we also have a similar representation:
\begin{equation}
D_{T^-}^\alpha f(s) 
= \frac{1}{\Gamma(1-\alpha)} \left(
    \frac{f(s)}{(T-s)^\alpha}
    - \alpha \int_s^T \frac{f(u)-f(s)}{(u-s)^{\alpha+1}}\,du
  \right).
\end{equation}

\subsection{Fractional Stochastic Integration with Respect to Fractional Brownian Motion}
      Let $W=\{W_t,t\in [0,1] \}$ be a Wiener process defined in the canonical probability space $(\Omega,\mathcal{F},\mathbb{P})$, where 
 $\Omega=C_0([0,1])$
   and $\mathbb{P}$ is the Wiener measure.
	A real-valued continuous process $\{B^H_t,\ t\in [0,T]\}$ is called a fractional Brownian motion with Hurst parameter $H\in (0,1)$ if it is a centered Gaussian process with covariance function  
\[
  \mathbb{E}[B^H_tB^H_s]
  = \tfrac{1}{2}\big(|t|^{2H}+|s|^{2H}-|t-s|^{2H}\big)
  := R_H(t,s).
\]
For simplicity, we assume through all that $\alpha = |H - 1/2|$.

When $H=\tfrac{1}{2}$, fractional Brownian motion reduces to the classical Brownian motion.  
However, if $H\neq \tfrac{1}{2}$, it is neither a semi-martingale nor a Markov process. Moreover, its sample paths belong to $C^{H-\varepsilon}_0([0,T])$ $P$-a.s. for every $\varepsilon>0$.

It is well known that an fractional Brownian motion $B^H_t$ admits a Wiener integral representation: there exists a deterministic kernel $K_H(t,s)$ and a standard Brownian motion $W$ such that  
\[
  B^H_t = \int_0^t K_H(t,s)\,dW_s,
\]
where
\[
  K_H(t,s)=
  \begin{cases}
    c_H\, s^{-\alpha}\displaystyle\int_s^t (u-s)^{\alpha-1}u^\alpha\,du, & H>\tfrac{1}{2}, \\[2ex]
    b_H\Big[(t/s)^{-\alpha}(t-s)^{-\alpha}
    +\alpha s^\alpha \int_s^t (u-s)^{-\alpha}u^{-(\alpha+1)}\,du\Big], & H<\tfrac{1}{2},
  \end{cases}
\]
with constants
\[
  c_H=\sqrt{\frac{H(2H-1)}{\mathrm{B}(2-2H,H-\tfrac{1}{2})}}, 
  \qquad 
  b_H=\sqrt{\frac{2H}{(1-2H)\mathrm{B}(1-2H,H+\tfrac{1}{2})}}.
\]

The operator \( K_H \), mapping from \( L^2([0,1]) \) to \( I_{0^+}^{H+\frac{1}{2}}(L^2([0,1])) \), is given by:  
\begin{equation}
	(K_Hh)(t) = \int_0^t K_H(t,s) h(s) \, ds.\label{kh operator}
\end{equation}

 From \cite[Lemma 10]{Nualart}, the operator \( K_H \) can be expressed using fractional integrals as follows:   
\[
(K_H h)(s) = \begin{cases}
\displaystyle
I^{1-2\alpha}_{0^+} \, s^{\alpha} I^{\alpha}_{0^+} \, s^{-\alpha} h, & H <1/2, \\
\displaystyle
I^{1}_{0^+} \, s^{\alpha} I^{\alpha}_{0^+} \, s^{-\alpha} h, & H >1/2.
\end{cases}
\]  
 The inverse operator \((K_H)^{-1}\) is then defined as:  
\begin{align}  
\notag (K_H)^{-1} h &= s^{\alpha} D^{\alpha}_{0^+} \, s^{-\alpha} D^{1-2\alpha}_{0^+} h, \quad H < 1/2, \\  
(K_H)^{-1} h &= s^{\alpha} D^{\alpha}_{0^+} \, s^{-\alpha} h', \quad H > 1/2,\label{g1/2}   
\end{align}    
for all \(h \in I^{H+\frac{1}{2}}_{0^+}(L^2)\). If \(h\) is differentiable, the operator simplifies to:  
\begin{equation} \label{l1/2}  
(K_H)^{-1} h = s^{-\alpha} I^{\alpha}_{0^+} \, s^{\alpha} h', \quad H <1/2.  
\end{equation}  
Moreover, the operator \((K_H)^{-1}\) preserves the adaptability property.

Stochastic integrals of deterministic functions with respect to a Gaussian process are called Wiener integrals.  Stochastic integrals with respect to fractional Brownian motion can be defined by  its Gaussianity. For the step function $I_{[0,t]}(\cdot)$, we assign the inner product 
\[
\langle I_{[0,t]}(\cdot), I_{[0,s]}(\cdot) \rangle_H = R_H(t,s).
\]
We denote by $\mathcal{H}$ the space obtained by completing the step functions under the above inner product.
Then the mapping $\mathcal{I}:I_{[0,t]}\longmapsto B^H_t$ corresponds to an isometry between the step function under the inner product $\langle  \cdot, \cdot \rangle_H $ and the Gaussian variable in $L^2(\mathbb{P})$. By extending this isometry, we obtain our integral $\mathcal{I}$ as an isometry from $\mathcal{H}$ to the space of Gaussian random variables in $L^2(\mathbb{P})$.

In order to illustrate the elements of $\mathcal{H}$ and the relation between the fractional Brownian  integral and the Brownian motion integral, we first introduce the following operator:
\[
  (K_H^*f)(s)=
  \begin{cases}
    c_H\Gamma(\alpha)\, s^{-\alpha}I_{1-}^\alpha\big(s^\alpha f_s\big), & H>\tfrac{1}{2}, \\[1ex]
    b_H\Gamma(1-\alpha)\, s^\alpha D^\alpha_{1^-}\big(s^{-\alpha}f_s\big), & H<\tfrac{1}{2}.
  \end{cases}
\]

\begin{theorem}[\cite{Biagini2008}]
Let $H\in(0,1)$. 
If $\psi\in\mathcal{H}$, we have
\begin{equation*}
  \int_0^T \psi_s\,dB^H_s 
  := \int_0^T (K_H^*\psi)(s)\,dW_s.
\end{equation*}
\end{theorem}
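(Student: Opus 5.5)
The identity is the standard transfer principle. Since $\mathcal{H}$ and $\int_0^T\psi_s\,dB^H_s$ are both defined by completing step functions, it suffices to prove the identity on indicators $I_{[0,t]}$ and then extend by linearity and density. Concretely, I will show that $K_H^*$ is an isometry from $(\mathcal{E},\langle\cdot,\cdot\rangle_H)$, the step functions, into $L^2([0,T])$. Then $K_H^*$ extends uniquely to an isometry $\mathcal{H}\to L^2([0,T])$, and $\psi\mapsto\int_0^T (K_H^*\psi)(s)\,dW_s$ becomes an isometry from $\mathcal{H}$ into the Gaussian subspace of $L^2(\mathbb{P})$.

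The key step is to compute $K_H^*I_{[0,t]}$. I will check that
\[
(K_H^*I_{[0,t]})(s)=K_H(t,s)\,I_{[0,t]}(s).
\]
For $H>1/2$ this is direct: by definition,
\[
c_H\Gamma(\alpha)\,s^{-\alpha}I^\alpha_{1^-}\bigl(u^\alpha I_{[0,t]}(u)\bigr)(s)=c_H s^{-\alpha}\int_s^t (u-s)^{\alpha-1}u^\alpha\,du
\]
for $s<t$, and it vanishes for $s\ge t$, which is exactly $K_H(t,s)$. For $H<1/2$, I will evaluate $D^\alpha_{1^-}\bigl(u^{-\alpha}I_{[0,t]}\bigr)(s)$ using the Weyl representation of the right fractional derivative. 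For $s<t$, split the integral $\int_s^1$ into the pieces over $[s,t]$ and $[t,1]$. The piece over $[t,1]$ produces $-s^{-\alpha}(t-s)^{-\alpha}/\alpha$ plus boundary terms that cancel the $(1-s)^{-\alpha}$ term. The piece over $[s,t]$ gives $\int_s^t (u^{-\alpha}-s^{-\alpha})(u-s)^{-\alpha-1}\,du$, and I will integrate it by parts into the form $\alpha\int_s^t (u-s)^{-\alpha}u^{-\alpha-1}\,du$. After multiplying by $b_H\Gamma(1-\alpha)s^\alpha$, this should match $b_H\bigl[(t/s)^{-\alpha}(t-s)^{-\alpha}+\alpha s^\alpha\int_s^t\cdots\bigr]$. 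This bookkeeping, including the normalization constants, is the main technical obstacle; I will organize it carefully rather than expect it to be hard in principle.

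With this identity, the Itô isometry and the representation $B^H_t=\int_0^t K_H(t,s)\,dW_s$ give, for indicators,
\[
\int_0^T (K_H^*I_{[0,t]})\,dW=B^H_t,
\]
and
\[
\langle K_H^*I_{[0,t]},K_H^*I_{[0,r]}\rangle_{L^2}=\int_0^{t\wedge r}K_H(t,s)K_H(r,s)\,ds=R_H(t,r)=\langle I_{[0,t]},I_{[0,r]}\rangle_H.
\]
The middle equality holds because the Wiener representation reproduces the fBm covariance. Linearity then yields both the isometry and the agreement $\mathcal{I}(\varphi)=\int_0^T K_H^*\varphi\,dW$ on $\mathcal{E}$.

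Finally, for general $\psi\in\mathcal{H}$, take step functions $\varphi_n\to\psi$ in $\mathcal{H}$. By definition, $\mathcal{I}(\varphi_n)\to\int_0^T\psi\,dB^H$ in $L^2(\mathbb{P})$. By the isometry, $K_H^*\varphi_n$ is Cauchy in $L^2([0,T])$ with limit $K_H^*\psi$, where $K_H^*$ denotes the extended operator. For $\psi$ regular enough that the fractional formula for $K_H^*$ makes sense, this limit coincides with the explicit formula, since $I^\alpha_{1^-}$ and $D^\alpha_{1^-}$ are closed operators on the relevant weighted spaces. The Itô isometry then gives $\int_0^T K_H^*\varphi_n\,dW\to\int_0^T K_H^*\psi\,dW$ in $L^2(\mathbb{P})$, and passing to the limit in the step-function identity concludes the proof.
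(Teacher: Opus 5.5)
The paper gives no proof of this statement; it is quoted from \cite{Biagini2008}. Your plan is the standard transfer-principle argument behind that citation, and it is correct: compute $K_H^*I_{[0,t]}=K_H(t,\cdot)I_{[0,t]}$, get the isometry on step functions from $\int_0^{t\wedge r}K_H(t,s)K_H(r,s)\,ds=R_H(t,r)$, then extend by density.

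There is one slip in your $H<\tfrac12$ computation. Integrating $\int_s^t (u^{-\alpha}-s^{-\alpha})(u-s)^{-\alpha-1}\,du$ by parts leaves a boundary term at $u=t$. After the factor $-\alpha$ from the Weyl formula, the $[s,t]$ piece is $(t^{-\alpha}-s^{-\alpha})(t-s)^{-\alpha}+\alpha\int_s^t(u-s)^{-\alpha}u^{-\alpha-1}\,du$, not just the integral. That boundary term turns the $s^{-\alpha}(t-s)^{-\alpha}$ coming from $[t,1]$ into $t^{-\alpha}(t-s)^{-\alpha}$. After multiplication by $s^{\alpha}$, this gives the factor $(t/s)^{-\alpha}$ in $K_H(t,s)$. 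As you wrote it, you would get $(t-s)^{-\alpha}$ and the match with $K_H(t,s)$ would fail.

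A smaller point concerns the density step. Closedness of $I^\alpha_{1^-}$ and $D^\alpha_{1^-}$ refers to a fixed Lebesgue space, so the approximating step functions must converge to $\psi$ in such a space as well as in $\mathcal{H}$. For $H>\tfrac12$, where $\mathcal{H}$-convergence alone gives no $L^p$ control, approximating $\psi\in L^2$ in $L^2$ suffices, since $L^2$ embeds continuously in $\mathcal{H}$.
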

In particular, for $\psi_s=\mathbf{1}_{[0,t]}(s)$, we recover
\begin{equation*}
  B^H_t=\int_0^t K_H(t,s)\,dW_s.
\end{equation*}

Analogous to the It\^o isometry, we have the following isometry for stochastic integrals with respect to fractional Brownian motion.
\begin{lemma}[\cite{Biagini2008}]\label{isometry}
If $f,g\in\mathcal{H}$, then
\begin{equation*}
  \mathbb{E}\!\left(\int_0^T f_s\,\mathrm{d}B^H_s \int_0^T g_s\,\mathrm{d}B^H_s\right)
  =\int_0^T (K_H^*f)(s)\,(K_H^*g)(s)\,\mathrm{d}s.
\end{equation*}
For $H > \tfrac{1}{2}$, we have a more intuitive equality:
\begin{equation}
    \mathbb{E}\!\left(\int_0^T f_s\,\mathrm{d}B^H_s \int_0^T g_s\,\mathrm{d}B^H_s\right)
    =H(2H-1)\int_0^T\int_0^T f_t g_s |s-t|^{2H-2}\,\mathrm{d}s\,\mathrm{d}t.
\end{equation}
\end{lemma}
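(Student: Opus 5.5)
The first identity in Lemma~\ref{isometry} follows directly from the transfer principle in the preceding theorem, and the second identity for $H>\tfrac12$ comes from computing the covariance of Wiener integrals explicitly.

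For the general identity, write $\int_0^T f_s\,dB^H_s=\int_0^T (K_H^*f)(s)\,dW_s$, and likewise for $g$. The It\^o isometry for the standard Brownian motion $W$ then gives
\[
\mathbb{E}\Big(\int_0^T (K_H^*f)\,dW\int_0^T (K_H^*g)\,dW\Big)=\int_0^T (K_H^*f)(s)(K_H^*g)(s)\,ds .
\]
This requires $K_H^*f,\,K_H^*g\in L^2([0,T])$. I would justify this by first checking indicators: for $f=\mathbf 1_{[0,t]}$ one has $(K_H^*\mathbf 1_{[0,t]})(s)=K_H(t,s)\mathbf 1_{[0,t]}(s)$, and $\int_0^{t\wedge s}K_H(t,u)K_H(s,u)\,du=R_H(t,s)$. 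So $K_H^*$ is an isometry from the step functions with $\langle\cdot,\cdot\rangle_H$ into $L^2$. Because $\mathcal H$ is defined as the completion of the step functions, this isometry extends to all of $\mathcal H$. Polarization then gives the stated bilinear identity.

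For $H>\tfrac12$, I would first prove the formula for step functions $f=\mathbf 1_{[0,t]}$ and $g=\mathbf 1_{[0,s]}$. The identity
\[
H(2H-1)\int_0^t\!\int_0^s|u-v|^{2H-2}\,du\,dv=\tfrac12\big(t^{2H}+s^{2H}-|t-s|^{2H}\big)
\]
can be checked by differentiating in $t$ and $s$: indeed $\partial_t\partial_s R_H(t,s)=H(2H-1)|t-s|^{2H-2}$, and both sides vanish when $t=0$ or $s=0$. Bilinearity extends the formula to all step functions. Finally I would pass to the limit along step-function approximations. Here the kernel $|u-v|^{2H-2}$ is integrable when $H>\tfrac12$, so the double-integral form is continuous on the subspace of $\mathcal H$ where $\int\!\int|f_u||f_v||u-v|^{2H-2}<\infty$, which includes $L^{1/H}$ by the Hardy--Littlewood--Sobolev inequality.

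The main obstacle is this density step. For $H>\tfrac12$, $\mathcal H$ contains distributions, so the double integral makes sense only on the subspace $|\mathcal H|$ of functions with finite absolute double integral. I would state and apply the second formula on $|\mathcal H|$ (or $L^{1/H}$), where dominated convergence and the HLS bound justify the limit.
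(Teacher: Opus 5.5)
Your proposal is correct. The paper gives no proof of this lemma and simply cites Biagini et al.; your argument is the standard proof behind that reference: the transfer principle plus the It\^o isometry, with $K_H^*$ shown to be isometric on indicators via $\int_0^{t\wedge s}K_H(t,u)K_H(s,u)\,du=R_H(t,s)$, and, for $H>\tfrac12$, the identity $\partial_t\partial_s R_H(t,s)=H(2H-1)|t-s|^{2H-2}$ checked on step functions and extended by density. Your restriction of the second formula to $|\mathcal H|\supseteq L^{1/H}$ is the right reading of the statement, since for $H>\tfrac12$ the space $\mathcal H$ contains distributions for which the double integral has no meaning.
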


	In this paper, we assume that $\sigma$ is H\"older continuous of order $\gamma$ with $\gamma+H>1$. Under this condition, the stochastic integral
\[
\int_0^1 \sigma_u \, dB^H_u
\]
is well defined pathwise in the sense of Young integration(see Lemma~\ref{young}), and this definition agrees with the Wiener integral.
 
	\begin{lemma}[\cite{Young1936}]   \label{young}
		For $f\in C^\beta([0,1]), g\in C^\gamma([0,1])$, if $\beta+\gamma>1,$ 
		\begin{equation*}
			\int_0^t \sigma_s dg_s,\quad \int_0^t g_sd\sigma_s 
		\end{equation*}
		are well-defined. Furthermore, we have
		\begin{equation*}
			\int_0^t \sigma_s dg_s=\sigma_t g_t-\sigma_0 g_0-\int_0^t g_sd\sigma_s.
		\end{equation*}
	\end{lemma}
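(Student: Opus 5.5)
The statement is the classical Young integration result. My plan is to build the integral as a limit of Riemann–Stieltjes sums and control them with Young's maximal inequality.

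Fix $t\in(0,1]$. For a partition $\pi=\{0=t_0<\dots<t_n=t\}$ define the sums
\[
S_\pi=\sum_{i}\sigma_{t_{i}}(g_{t_{i+1}}-g_{t_i}).
\]
I will show that $S_\pi$ converges as the mesh $|\pi|\to0$, and call the limit $\int_0^t \sigma_s\,dg_s$. Here $\sigma\in C^\beta$ plays the role of $f$.

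The core step is Young's point-removal argument. For a partition with $n\ge 2$ intervals, removing an interior point $t_i$ changes the sum by
\[
(\sigma_{t_i}-\sigma_{t_{i-1}})(g_{t_{i+1}}-g_{t_i}).
\]
This is bounded by $[\sigma]_\beta[g]_\gamma (t_{i+1}-t_{i-1})^{\beta+\gamma}$. Since $\sum_i (t_{i+1}-t_{i-1})\le 2t$, pigeonhole gives some $i$ with $t_{i+1}-t_{i-1}\le 2t/(n-1)$. Removing points one at a time down to the trivial partition yields
\[
|S_\pi-\sigma_0(g_t-g_0)|\le [\sigma]_\beta[g]_\gamma (2t)^{\beta+\gamma}\sum_{k\ge1}k^{-(\beta+\gamma)}.
\]
This is where $\beta+\gamma>1$ is used: it makes $\zeta(\beta+\gamma)<\infty$, so the constant is finite.

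Applying the same estimate on each subinterval gives the Cauchy property. Let $\pi'$ be a refinement of $\pi$. Apply the bound on each interval $[t_i,t_{i+1}]$ of $\pi$. Then
\[
|S_{\pi'}-S_\pi|\le C\sum_i |t_{i+1}-t_i|^{\beta+\gamma}\le C\,t\,|\pi|^{\beta+\gamma-1}\to0.
\]
For two arbitrary partitions, compare both with their common refinement. Hence the limit exists and is independent of the choice of partitions. Exchanging the roles of $\sigma$ and $g$ defines $\int_0^t g_s\,d\sigma_s$ in the same way.

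The integration by parts formula follows from an algebraic identity on the sums. Write $\Delta_i h=h_{t_{i+1}}-h_{t_i}$. Then
\[
\sum_i\sigma_{t_i}\Delta_i g+\sum_i g_{t_{i+1}}\Delta_i\sigma=\sigma_tg_t-\sigma_0g_0,
\]
which is Abel summation. The second sum uses right endpoints. It differs from the left-endpoint sum $\sum_i g_{t_i}\Delta_i\sigma$ by $\sum_i\Delta_i g\,\Delta_i\sigma$. That difference is bounded by $[g]_\gamma[\sigma]_\beta\sum_i|t_{i+1}-t_i|^{\beta+\gamma}\to0$. Passing to the limit gives the formula.

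The main obstacle is the maximal inequality, that is, the uniform bound on $S_\pi$ independent of the number of points, which rests on the greedy point removal. The rest is bookkeeping.
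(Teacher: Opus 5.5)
Your argument is correct. It is the classical point-removal proof from \cite{Young1936}, which the paper cites in place of a proof of its own. The greedy removal bound with constant $(2t)^{\beta+\gamma}\zeta(\beta+\gamma)$, the refinement estimate $C\,t\,|\pi|^{\beta+\gamma-1}$, and Abel summation with the vanishing correction $\sum_i \Delta_i g\,\Delta_i\sigma$ are all sound. Reading the hypothesis $f\in C^\beta$ as a condition on $\sigma$ is also the right interpretation. As a bonus, your maximal inequality applied on $[s,t]$ is exactly the paper's Lemma~\ref{young2}, with the explicit constant $2^{\beta+\gamma}\zeta(\beta+\gamma)$.
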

	
	\begin{lemma}[\cite{Young1936}] \label{young2}
Let $f,g:[0,T]\to \mathbb{R}$ be functions such that $f \in C^{\beta}$ 
and $g \in C^{\gamma}$ with $\beta,\gamma \in (0,1)$ and $\beta+\gamma>1$. 
Then  for any $0\leq s<t\leq T$, one has
\begin{equation}
	\left|\int_s^t f_r \, dg_r - f_s (g_t-g_s)\right|
   \leq C_{\alpha,\beta}\, [f]_{\beta}\,[g]_{\gamma}\, |t-s|^{\beta+\gamma}.\label{youngint}
\end{equation}
\end{lemma}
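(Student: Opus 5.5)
The estimate \eqref{youngint} is the classical Young--Lo\`eve inequality, and I will prove it with the standard sewing argument built on dyadic partitions. Fix $0\le s<t\le T$. For $n\ge 0$ let $\pi_n$ be the dyadic partition of $[s,t]$ into $2^n$ intervals $[t_k^n,t_{k+1}^n]$ of length $2^{-n}(t-s)$. Define the Riemann sums
\[
S_n:=\sum_{k=0}^{2^n-1} f_{t_k^n}\bigl(g_{t_{k+1}^n}-g_{t_k^n}\bigr).
\]
Then $S_0=f_s(g_t-g_s)$. Since $f$ is continuous and $g$ is Hölder with $\beta+\gamma>1$, the Young integral $\int_s^t f_r\,dg_r$ of Lemma~\ref{young} is the limit of such Riemann sums. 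Hence it suffices to bound $\sum_{n\ge0}|S_{n+1}-S_n|$.

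\textbf{Telescoping step.} Passing from $\pi_n$ to $\pi_{n+1}$ inserts the midpoint $m$ into each interval $[u,v]:=[t_k^n,t_{k+1}^n]$. On that interval the change in the sum is
\[
f_u(g_m-g_u)+f_m(g_v-g_m)-f_u(g_v-g_u)=(f_m-f_u)(g_v-g_m).
\]
In absolute value this is at most
\[
[f]_\beta[g]_\gamma\bigl(2^{-(n+1)}(t-s)\bigr)^{\beta+\gamma}.
\]
Summing over the $2^n$ intervals gives
\[
|S_{n+1}-S_n|\le [f]_\beta[g]_\gamma\,2^{-(\beta+\gamma)}\,2^{n(1-\beta-\gamma)}\,|t-s|^{\beta+\gamma}.
\]

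\textbf{Summation.} Because $\beta+\gamma>1$, the factors $2^{n(1-\beta-\gamma)}$ form a convergent geometric series. Summing over $n\ge0$ yields
\[
\Bigl|\lim_n S_n - S_0\Bigr|\le \frac{2^{-(\beta+\gamma)}}{1-2^{1-\beta-\gamma}}\,[f]_\beta[g]_\gamma\,|t-s|^{\beta+\gamma}.
\]
This is \eqref{youngint} with a constant depending only on $\beta$ and $\gamma$; it is the one denoted $C_{\alpha,\beta}$ in the statement.

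\textbf{Main obstacle: identifying the limit.} The remaining point is that $\lim_n S_n$ really equals the Young integral. If the integral is defined as the limit of Riemann sums over arbitrary partitions with mesh tending to zero, this is immediate, since dyadic partitions are a particular sequence. If instead only existence along general partitions is assumed, I would invoke the classical removal-of-points argument. For a general partition with $N$ intervals, some point can be removed while changing the sum by at most
\[
[f]_\beta[g]_\gamma\Bigl(\tfrac{2|t-s|}{N-1}\Bigr)^{\beta+\gamma}.
\]
Iterating down to the trivial partition gives the same bound for every partition, and letting the mesh tend to zero then yields the same estimate for the integral itself. This also sidesteps the identification issue, and it is the route I would fall back on if needed.
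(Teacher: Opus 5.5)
Your argument is correct. The midpoint identity $(f_m-f_u)(g_v-g_m)$, the level-$n$ bound $2^{-(\beta+\gamma)}2^{n(1-\beta-\gamma)}[f]_\beta[g]_\gamma|t-s|^{\beta+\gamma}$ and the geometric summation all check out. Since Young's theorem (Lemma~\ref{young}) gives the integral as the limit of Riemann sums along any partitions with mesh tending to zero, the dyadic limit is indeed $\int_s^t f_r\,dg_r$.

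The paper gives no proof of this lemma; it simply quotes the estimate from Young (1936). Young's original argument is essentially the point-removal scheme you list as a fallback: remove an interior point $t_i$ with $t_{i+1}-t_{i-1}\le 2|t-s|/(N-1)$ and iterate, which gives the constant $2^{\beta+\gamma}\zeta(\beta+\gamma)$.

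The two routes trade off as follows. Your dyadic, sewing-type argument is shorter, gives the explicit constant $2^{-(\beta+\gamma)}/(1-2^{1-\beta-\gamma})$, and shows directly that the dyadic sums converge. However, it controls only one sequence of partitions, so it relies on the existence theorem to identify the limit with the Young integral. Young's point-removal bound holds uniformly over every partition. That is exactly what is needed to prove existence of the integral in the first place, by comparing a partition with its refinements subinterval by subinterval, so it makes the lemma self-contained.

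One small overstatement: the fallback does not really ``sidestep'' the identification issue. It still needs the integral to be a mesh-to-zero limit of left-point Riemann sums. Under the standard definition this is automatic, so nothing is actually missing from your proof.
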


   Below, we introduce the operator \(K_H^\sigma\) via Young integration and the operator \(K_H\). 
Define the operator $K_H^\sigma$ on $L^2([0,1])$, associated with the process $\sigma_\cdot$ and the operator $K_H$, by
\begin{equation}
    (K_H^\sigma f)(t) = \int_0^t \sigma_s \, d(K_H f)(s), \qquad t \in [0,1]. \label{khsigma}
\end{equation}
According to the properties of the Young integral and the $K_H$ operator, it can be concluded that  
the operator $K_H^\sigma$ is injective.
Consequently, we can define its inverse
\[
    (K_H^\sigma)^{-1} : K_H^\sigma\bigl(L^2([0,1])\bigr) \to L^2([0,1]), 
    \qquad 
    f \mapsto (K_H)^{-1}\!\left(\int_0^\cdot \sigma_s^{-1} \, df_s\right).
\]
For any $\phi$ such that $\phi - y_0 \in K_H^\sigma(L^2([0,1]))$, we denote by $\dot{\phi}$ the element of $L^2([0,1])$ satisfying
\begin{equation}
    \phi - y_0 = K_H^\sigma(\dot{\phi}). \label{khs}
\end{equation}

It is worth noting that the operator $K_H$ is an isomorphism from $L^2([0,1])$ onto 
$I_{0^+}^{H+\frac{1}{2}}(L^2([0,1]))$ 
(see \cite[Theorem~2.1]{Decreusefond1999}).  
From the embedding theorem, $K_H(\dot{\phi})$ is $H$-H\"older continuous, and therefore $\phi=y_0+K_H^\sigma(\dot{\phi})$ also enjoys $H$-H\"older continuity by Lemma~\ref{young2}.

 To ensure the validity of our main result, the coefficients in \eqref{fir} must satisfy the following assumptions.  
 \begin{assumption}[A]\label{ass:A}
We impose the following assumptions on the coefficients of \eqref{fir}.

\textbf{(1) Case $1/4<H\leq 1/2:$}
The coefficients \(f\) and \(\sigma\) satisfy:
\begin{enumerate}
    \item $\sigma \in C^1([0,1])$ and $\inf_{0 \leq s \leq 1} |\sigma_s| > 0$. 
    Without loss of generality, we may assume that $0 < m \leq \sigma \leq M$;
    \item $f$ is continuous in $(t,x,y)$, twice continuously differentiable with respect to $x$ and $y$, and bounded.
\end{enumerate}

\textbf{(2) Case $1/2<H<1:$}
    \begin{enumerate}
        \item All the assumptions in the above case are satisfied;
         \item $f$ satisfies the Lipschitz condition
        \[
           |f_t(x_1,y_1) - f_s(x_2,y_2)| \leq L \big( |t-s| + |x_1-x_2|+|y_1-y_2| \big),\quad t,s\in[0,1];
        \]
        \item The following inequality holds:
        \begin{equation}\label{assumpT}
              1 < \frac{m^2 (2\beta+1)\Gamma(1+\beta)^2}
                    {2M^2 \alpha^2 L^2 \Gamma(\beta-\alpha)^2}.
        \end{equation}
        
    \end{enumerate}

\end{assumption}

\begin{remark}
The boundedness of $f$ is required for the application of Girsanov’s theorem. 
In the computation of the OM functional, $f$ and its derivatives 
will automatically remain bounded under the condition 
\[
\Big\| \int_0^\cdot \sigma_s \, dB^H_s \Big\| \leq \varepsilon.
\]
\end{remark}

\begin{remark}
For the assumption \eqref{assumpT} in Case $1/2 < H < 1$, on a more general time interval $[0,T]$, the condition can be replaced by
\[
T^{2H+1}
<
\frac{m^2 (2\beta+1)\Gamma(1+\beta)^2}
     {2M^2 \alpha^2 L^2 \Gamma(\beta-\alpha)^2}.
\]

\end{remark}

\section{Main Results}
In this section, we first present the main results of our study, and verify our results through numerical simulations.
 The details of the computations and derivations leading to the main results are provided in the subsequent section.
Our OM functional takes the following form:
\[
J(\psi)=
\begin{cases}
-\dfrac{1}{2}\displaystyle\int_0^1 
\Big(
\dot{\phi}_s
- s^{\alpha} D_{0^+}^\alpha \big(s^{-\alpha} 
\sigma_s^{-1} f_s(\psi_s,\phi_s)\big)
\Big)^2 
+ d_H \,\partial_y f_s(\psi_s,\phi_s) \, ds, 
& \tfrac{1}{2}<H<1, \\[2.2em]
-\dfrac{1}{2}\displaystyle\int_0^1 
\dfrac{
\big(
\phi_s'
-  f_s(\psi_s,\phi_s)
\big)^2}{\sigma_s^2} 
+  \partial_y f_s(\psi_s,\phi_s) \, ds,
& H=\tfrac{1}{2}, \\[2.2em]
-\dfrac{1}{2}\displaystyle\int_0^1 
\Big(
\dot{\phi}_s
- s^{-\alpha} I_{0^+}^\alpha \big(s^{\alpha} 
\sigma_s^{-1} f_s(\psi_s,\phi_s)\big)
\Big)^2 
+ d_H \,\partial_y f_s(\psi_s,\phi_s) \, ds,
& \tfrac{1}{4}<H<\tfrac{1}{2},
\end{cases}
\]
where \(\alpha = H - 1/2\), \(d_H\) is a constant depending on \(H\), and the velocity of a path \(\psi\) is denoted by \(\phi_t = \psi_t'\). Moreover, $\phi$ and $\dot{\phi}$ are related through
\begin{equation*}
    \phi(\cdot) - y_0 = K_H^\sigma(\dot{\phi})(\cdot),
\end{equation*}
where $K_H^\sigma$ is defined by \eqref{khsigma}. In particular, when $H=\tfrac{1}{2}$, this relation reduces to
\begin{equation*}
    \dot{\phi}(\cdot) = (K_H^\sigma)^{-1}\bigl(\phi(\cdot) - y_0\bigr)
    = \frac{\phi(\cdot)}{\sigma(\cdot)}.
\end{equation*}

Moreover, these three cases can be written in a unified way as
\begin{equation}\label{mr}
    J(\psi)
    = -\frac{1}{2}\int_0^1  
    \Big[
        (K^H_\sigma)^{-1} 
        \Big(
            \phi_u - y_0 - \int_0^u f_v(\psi_v,\phi_v)\,dv
        \Big)
    \Big]^2 (s)
    + d_H \,\partial_y f_s(\psi_s,\phi_s)\,ds.
\end{equation}

If we consider the velocity transitioning from \( y_0 \) at time \( t = 0 \) to \( y_1 \) at time \( t = 1 \), for the case \( \frac{1}{4} < H < \frac{1}{2} \), using variational methods, we obtain the following:
\begin{align*}
& \lim_{\varepsilon\to 0}\frac{J(\psi+\varepsilon\eta)-J(\psi)}{\varepsilon}\\
&= C \int_0^1 2\left(\dot{(\psi')}_s- s^{-\alpha} I_{0^+}^\alpha s^{\alpha} \sigma_s^{-1} f_s(\psi_s,\psi_s') \right)\\
&\qquad\qquad\cdot\left( \dot{(\eta')}_s-s^{-\alpha}I_{0^+}^\alpha s^\alpha \sigma_s^{-1} (\partial_x f_s(\psi_s,\psi_s')\eta_s+ \partial_y f_s(\psi_s,\psi_s')\eta_s') \right)\\ &\qquad\qquad+d_H\left(\partial_{xy}f_s(\psi_s,\psi_s')\eta_s+ \partial_{yy}f_s(\psi_s,\psi_s')\eta_s' \right)ds\\
    &=C \int_0^1 2\left(\dot{(\psi')}_s- s^{-\alpha} I_{0^+}^\alpha s^{\alpha} \sigma_s^{-1} f_s(\psi_s,\psi_s') \right)\\ 
    &\qquad\qquad\cdot\left( s^{-\alpha}I_{0^+}^\alpha s^\alpha \sigma_s^{-1}\eta_s'' -s^{-\alpha}I_{0^+}^\alpha s^\alpha \sigma_s^{-1} (\partial_x f_s(\psi_s,\psi_s')\eta_s+ \partial_y f_s(\psi_s,\psi_s')\eta_s') \right)\\ &\qquad\qquad+d_H\left(\partial_{xy}f_s(\psi_s,\psi_s')\eta_s+ \partial_{yy}f_s(\psi_s,\psi_s')\eta_s' \right)ds\\
    &=C \int_0^1 [2 ((d/ds)^2\sigma_s^{-1} s^\alpha I^\alpha_{1^-}s^{-\alpha}- \partial_x f_s(\psi_s,\psi_s') \sigma_s^{-1} s^\alpha I^\alpha_{1^-}s^{-\alpha}\\
    &\qquad\qquad+ (d/ds)\partial_y f_s(\psi_s,\psi_s')\sigma_s^{-1} s^\alpha I^\alpha_{1^-}s^{-\alpha} )+\left(\dot{(\psi')}_s- s^{-\alpha} I_{0^+}^\alpha s^{\alpha} \sigma_s^{-1} f_s(\psi_s,\psi_s')\right)\\&\qquad\qquad+d_H(\partial_{xy}f_s(\psi_s,\psi_s')-(d/ds)\partial_{yy}f_s(\psi_s,\psi_s')) ]\eta_s ds . \end{align*}
Therefore, the most probable path for the velocity \( \phi_s \) should satisfy the equation:
\begin{align*}
   &\left( 2 \left( \frac{d}{ds} \right)^2 \sigma_s^{-1} s^\alpha I_{1^-}^\alpha s^{-\alpha} - \partial_x f_s(\psi_s, \phi_s) \sigma_s^{-1} s^\alpha I_{1^-}^\alpha s^{-\alpha} + \frac{d}{ds} \partial_y f_s(\psi_s, \phi_s) \sigma_s^{-1} s^\alpha I_{1^-}^\alpha s^{-\alpha}\right)\\
   \cdot &\left( \dot{(\phi_s)}_s - s^{-\alpha} I_{0^+}^\alpha s^{\alpha} \sigma_s^{-1} f_s(\psi_s, \phi_s) \right) + d_H \left( \partial_{xy} f_s(\psi_s, \phi_s) - \frac{d}{ds} \partial_{yy} f_s(\psi_s, \phi_s) \right) = 0
\end{align*}
with the boundary conditions $ (X_0,Y_0) = (x_0,y_0 )$ and $(X_1, Y_1) = (x_1,y_1 )$.
The same equations can be obtained for the regular  case ($1/2<H<1$)
\begin{align*}
   &\left( 2 \left( \frac{d}{ds} \right)^2 \sigma_s^{-1} s^{-\alpha} D_{1^-}^\alpha s^{\alpha} - \partial_x f_s(\psi_s, \phi_s) \sigma_s^{-1} s^{-\alpha} D_{1^-}^\alpha s^{\alpha} + \frac{d}{ds} \partial_y f_s(\psi_s, \phi_s) \sigma_s^{-1} s^{\alpha} D_{1^-}^\alpha s^{\alpha}\right)\\
   \cdot &\left( \dot{(\phi_s)}_s - s^{\alpha} D_{0^+}^\alpha s^{-\alpha} \sigma_s^{-1} f_s(\psi_s, \phi_s) \right) + d_H \left( \partial_{xy} f_s(\psi_s, \phi_s) - \frac{d}{ds} \partial_{yy} f_s(\psi_s, \phi_s) \right) = 0,
\end{align*}
and  the standard case ($H=1/2$)
\begin{align*}
    &((d/ds)^2-\partial_x f_s(\psi_s,\phi_s)+\frac{d}{ds}\partial_x f_s(\psi_s,\phi_s))(2\frac{\phi_s'-f_s(\psi_s,\phi_s)}{\sigma_s^2})\\ +&
    (\partial_{xy}f_s(\psi_s,\phi_s)+\partial_{yy}f_s(\psi_s,\phi_s))=0.
\end{align*}

    \begin{remark}\label{remark}
      It follows from \eqref{mr} that if $\partial_y f(x,y)=C$, then, when the path is constrained to satisfy the boundary conditions $(X(0),X'(0))=(x_0,y_0)$ and $(X(1),X'(1))=(x_1,y_1)$, where $(x_0,y_0)$ and $(x_1,y_1)$ are connected by a solution of the corresponding noiseless system, the resulting most probable path coincides with the noiseless solution trajectory.

    \end{remark}
    
   Next, we illustrate our main results via the following examples. All numerical simulations in this paper were performed using MATLAB. 

\begin{example}[Pendulum equation]
Consider the second-order stochastic differential equation
\begin{equation}\label{example}
    X_t'' = -\gamma X_t' - k \sin(X_t) + \big(\sigma_0 + A\cos(\omega t)\big)\,\xi_t^H,
\end{equation}
where
\[
    k = \frac{1}{2}\left( \frac{\sqrt{\pi}\,\Gamma(1/4)}{\Gamma(3/4)} \right)^{\!2}.
\]
Let $\bar{X}_t$ denote the solution of the corresponding deterministic system
(obtained by setting $\sigma_0 + A\cos(\omega t) \equiv 0$ in \eqref{example}) with initial condition
\[
    \big(\bar{X}_0,\bar{X}'_0\big) = \big(-\tfrac{\pi}{2},0\big).
\]
 For simplicity, we only consider the undamped case with $\gamma = 0$  
 here.
 It is straightforward to verify that
\[
    \big(\bar{X}_1,\bar{X}'_1\big) = \big(\tfrac{\pi}{2},0\big).
\]

We are interested in the most probable path of the SDE which starts from
$(-\pi/2,0)$ at time $t=0$ and reaches $(\pi/2,0)$ at time $t=1$.
Since $\partial_y \sin(x) = 0$, it follows from Remark\ \ref{remark} that
$\bar{X}_t$ is precisely the most probable path under the above boundary conditions. Numerical simulations are shown in Figs. \ref{fig1a}, \ref{fig1b}, and \ref{fig1c}. It is clear that when $H\le \frac12$, our assumptions are satisfied. Moreover, for the choice $H=0.51$, $\beta=0.28$, $\sigma_0=2$, and $A=0.1$, the conditions are also fulfilled. 

    From the figure, we observe that the average path is very close to the most probable path. Furthermore, it is evident in this example that while the noise frequency demonstrates a clear correlation with the SDE's sample paths, it leaves the most probable path unaffected.

\end{example}

\begin{figure}[htbp]
    \centering
    \includegraphics[width=0.9\textwidth]{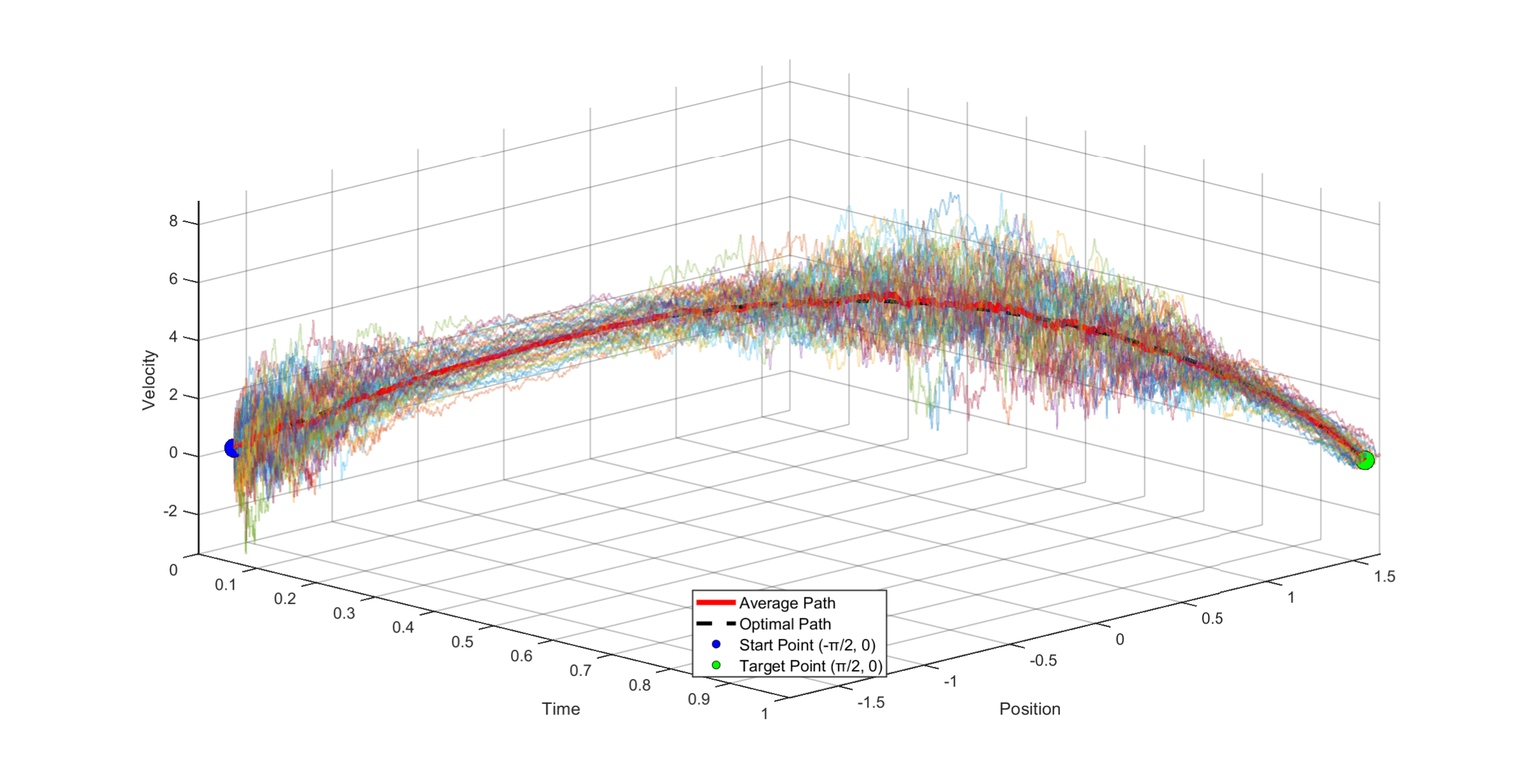}
    \caption{$H=0.3,\sigma_0=2,A=1.5,\omega=10$}
    \label{fig1a}
\end{figure}

\begin{figure}[htbp]
    \centering
    \includegraphics[width=0.9\textwidth]{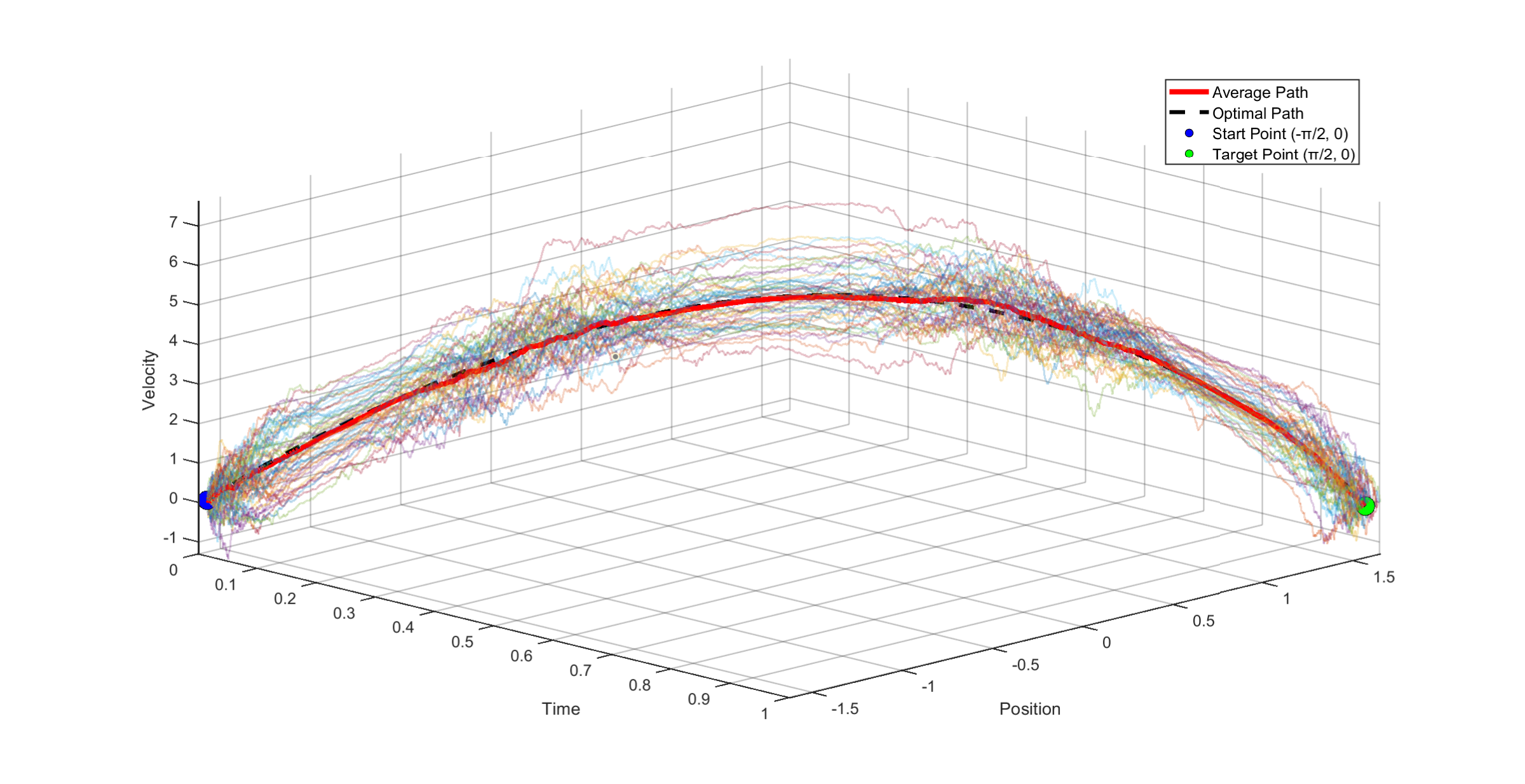}
    \caption{$H=0.5,\sigma_0=2,A=1.5,\omega=6\pi$}
    \label{fig1b}
\end{figure}

\begin{figure}[htbp]
    \centering
    \includegraphics[width=0.9\textwidth]{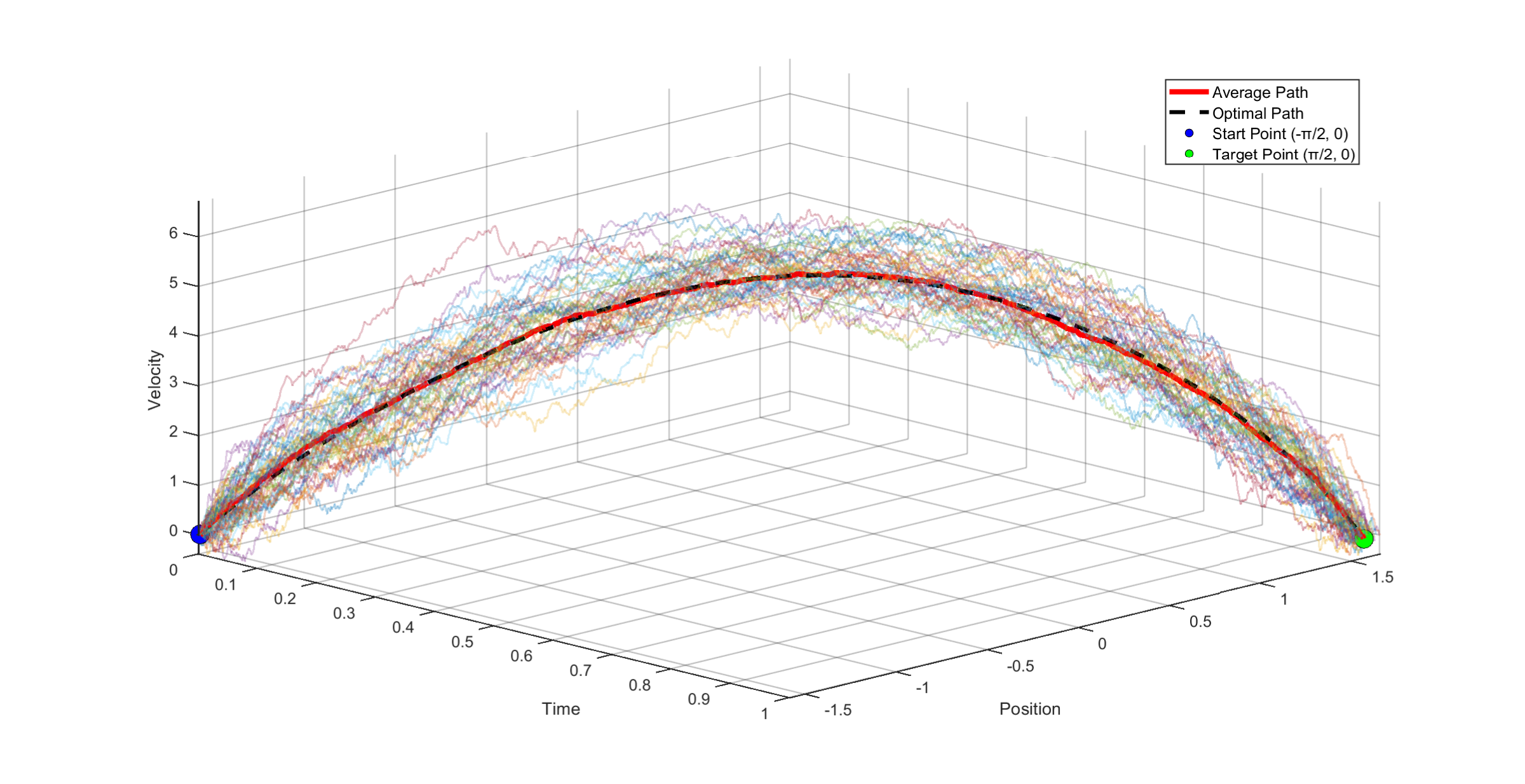}
    \caption{$H=0.51,\sigma_0=2,A=0.1,\omega=8\pi$}
    \label{fig1c}
\end{figure}

    The second example concerns a special Duffing equation, in which we investigate the most probable transition path between two equilibrium points.
 \begin{example}(Duffing Equation)
We now consider the equation
\begin{equation}  \label{equation 2}
    X''_t + \gamma X_t' + V'(X_t) = \sigma_t \xi_t^H .
\end{equation}
For simplicity and to observe its typical behavior, we choose 
$\gamma = 0.1$, $V(x) = \frac{1}{4}(x^4 - 2x^2)$, $\sigma_t = 3$, and $H = \frac{1}{2}$.
We consider the most probable transition path from $(-1, 0)$ to $(1, 0)$. 
For the system
\begin{equation*}
    \frac{d}{dt} \begin{bmatrix} X_t \\ Y_t \end{bmatrix} 
    = \begin{bmatrix} 
        Y_t \\ 
        -\gamma Y_t - V'(X_t)
    \end{bmatrix}
    = \begin{bmatrix}
        Y_t \\ 
        -\gamma Y_t - (X_t^2-1)X_t
    \end{bmatrix},
\end{equation*}
its linearization is
\begin{equation*}
    \frac{d}{dt} \begin{bmatrix} X_t \\ Y_t \end{bmatrix} 
    = \begin{bmatrix} 
        0 & 1 \\
        -V''(\bar{X}) & -\gamma 
    \end{bmatrix}
    \begin{bmatrix} X_t - \bar{X} \\ Y_t - \bar{Y} \end{bmatrix}.
\end{equation*}
The unperturbed system possesses equilibrium points at $(\pm 1, 0)$, and the addition of noise can induce transitions between these two states.
The corresponding OM functional is
\begin{equation*}
    J(\psi_s) = \int_0^1 \frac{\bigl( \psi_s'' + \gamma \psi_s' + V'(\psi_s) \bigr)^2}{\sigma^2} - \gamma  \, ds .
\end{equation*} 
For the autonomous case mentioned above, by exploiting the energy structure of the equation, its  OM functional can be reduced to:
\begin{align*}
    &\int_0^1 \bigl( \psi_s'' + \gamma \psi_s' + V'(\psi_s) \bigr)^2 ds \\
    &= \int_0^1 \bigl( \psi_s'' + V'(\psi_s) \bigr)^2 + 2\gamma \psi_s' \bigl( \psi_s'' + V'(\psi_s) \bigr) + \gamma^2 \psi_s'^2 \, ds \\
    &= \int_0^1 \bigl( \psi_s'' + V'(\psi_s) \bigr)^2 + \gamma^2 \psi_s'^2 \, ds 
       + \gamma \Bigl( \psi_1'^2 + V(\psi_1) - \psi_0'^2 - V(\psi_0) \Bigr) \\
    &= \int_0^1 \bigl( \psi_s'' + V'(\psi_s) \bigr)^2 + \gamma^2 \psi_s'^2 \, ds .
\end{align*}
Using variational methods, we derive the Euler--Lagrange equation:
\begin{equation*}
    \psi_s^{(4)} + \bigl( 2V''(\psi_s) - \gamma^2 \bigr) \psi_s'' 
    + V^{(3)}(\psi_s) \bigl( \psi_s' \bigr)^2 
    + V''(\psi_s) V'(\psi_s) = 0 .
\end{equation*}
Solutions of the Euler--Lagrange equation are compared with the mean trajectories of the stochastic system, thereby validating the accuracy of the theoretical results. See Fig.~\ref{example2}.   The numerical simulation for this case was conducted with the bvp4c algorithm.

Furthermore, we examine the case with a time-dependent parameter $\sigma_t = 2 + \sin(8\pi t)$ for $H = 0.3 ,  0.5$ and $0.55$. Under these  assumptions, due to the fractional-order calculus and the non-autonomous nature of the system, our Euler-Lagrange equation becomes considerably more complex. Consequently, the corresponding most probable path is obtained by minimizing the functional $J$ numerically via the interior-point method. Figures~\ref{example3} and~\ref{example4} present a comparison among the paths, the average path, and the most probable path for the singular case and the standard case. Since the main result in the regular case involve fractional derivatives, which pose significant challenges for numerical simulation, we present only the average path of the system here; see Fig~\ref{example5} and the related discussion. 

The numerical simulations reveal that the most probable path is affected not only by the Hurst parameter $H$ of the fractional Brownian motion but also by its time-modulated intensity $\sigma_t$. Moreover, the characteristic frequency of the noise manifests itself in the shape of the most probable path.

\end{example}

\begin{figure}[htbp]
    \centering
    
    \begin{subfigure}[b]{0.9\textwidth}
        \centering
        \includegraphics[width=\textwidth,height=0.4\textheight,keepaspectratio]{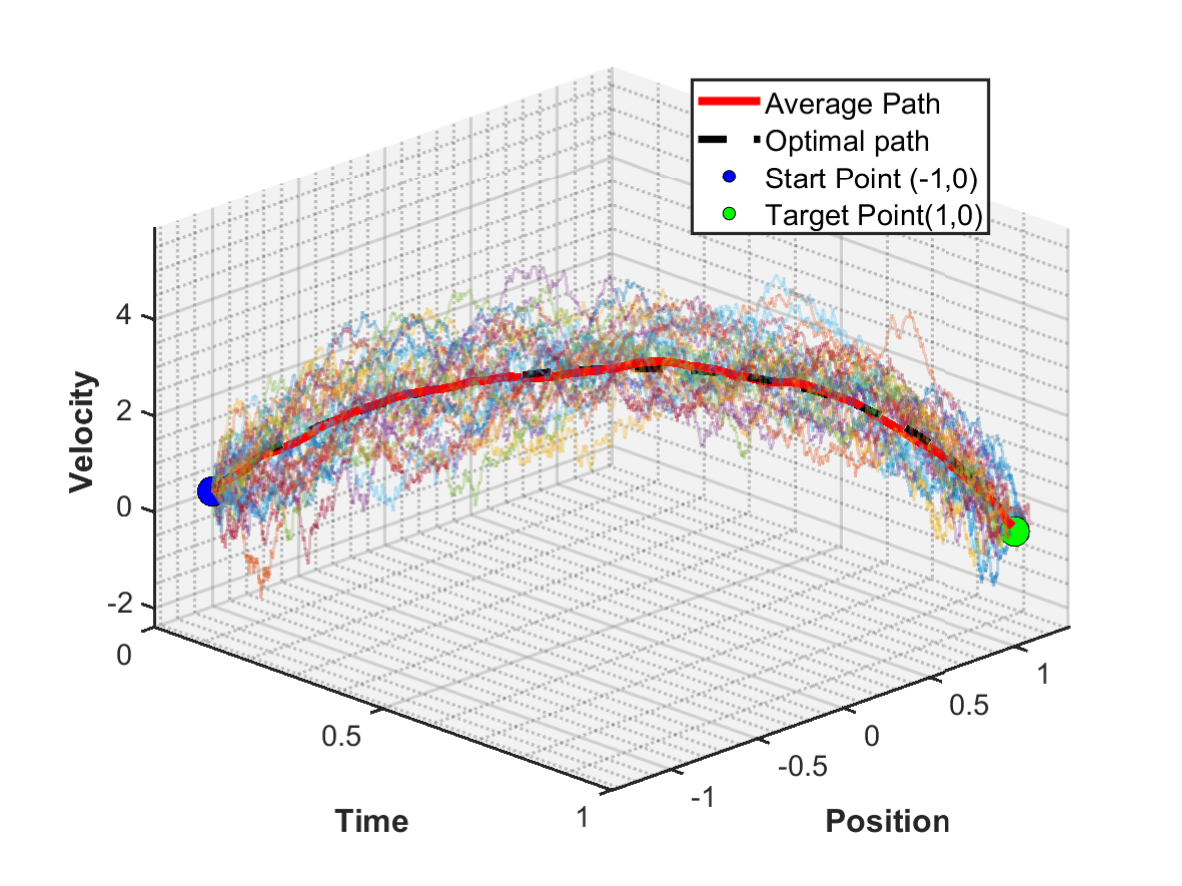}
        \caption{}
    \end{subfigure}
    
    \vspace{0.2cm}
    
    \begin{subfigure}[b]{0.9\textwidth}
        \centering
        \includegraphics[width=\textwidth,height=0.4\textheight,keepaspectratio]{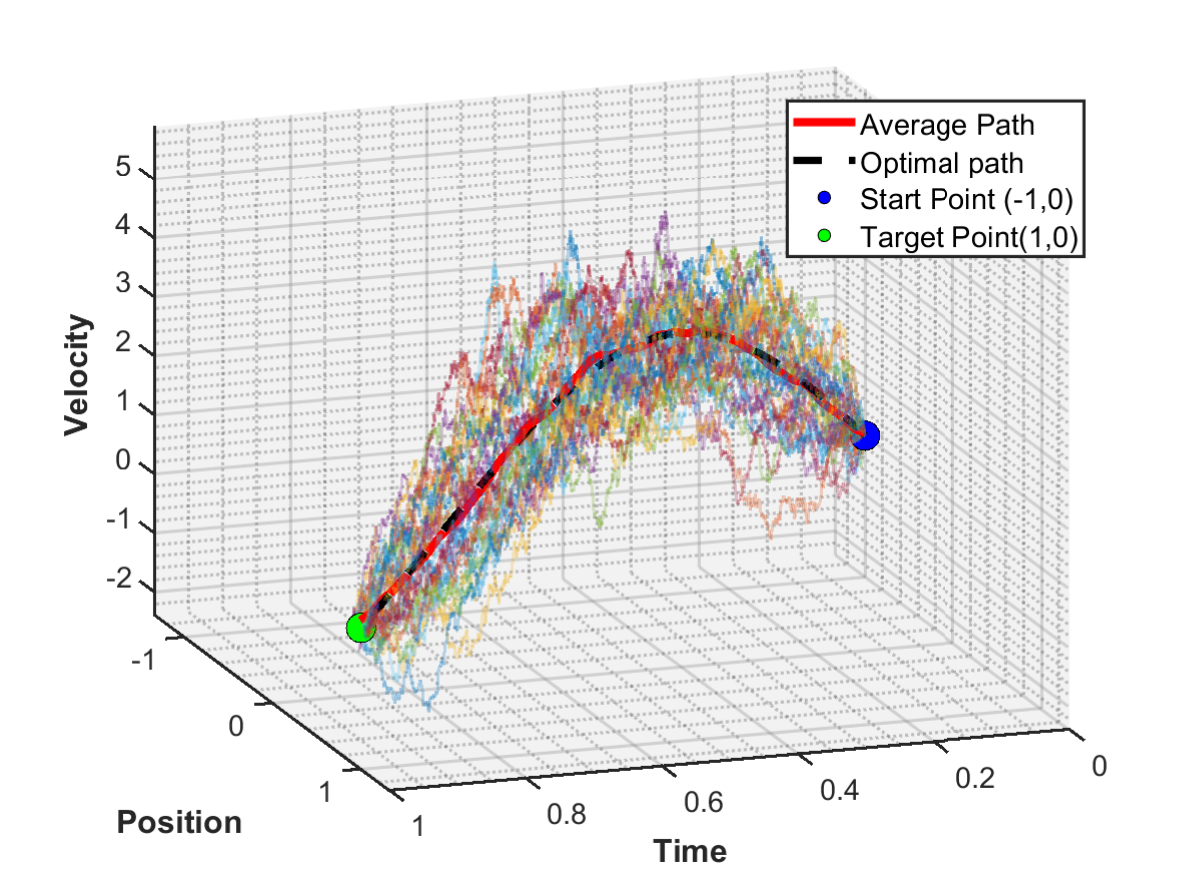}
        \caption{}
    \end{subfigure}
    
    \caption{The average path and the optimal path for \eqref{equation 2}}
    \label{example2}
\end{figure}

    \begin{figure}[htbp]
    \centering
    
    \begin{subfigure}[b]{0.9\textwidth}
        \centering
        \includegraphics[width=\textwidth]{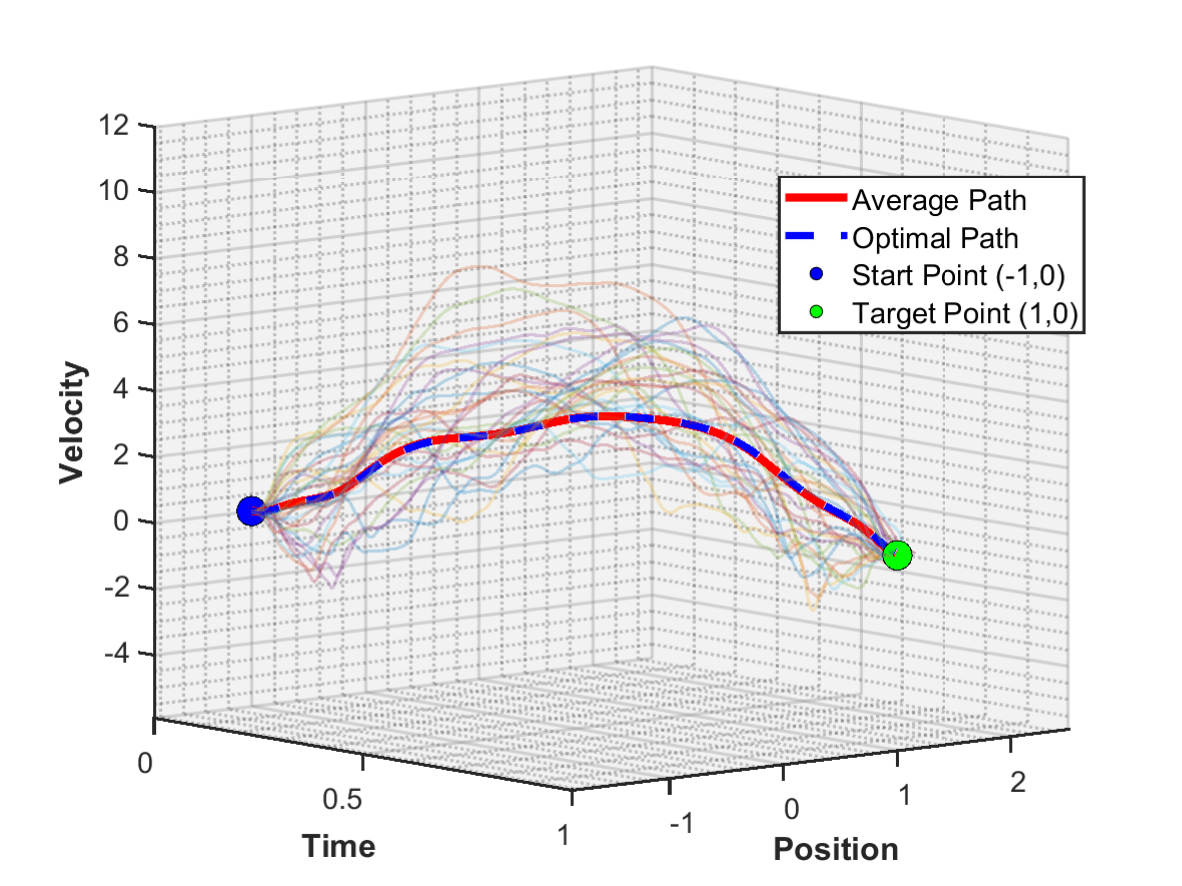}
        \caption{}
    \end{subfigure}
    
    \vspace{0.5cm}
    
    \begin{subfigure}[b]{0.9\textwidth}
        \centering
        \includegraphics[width=\textwidth]{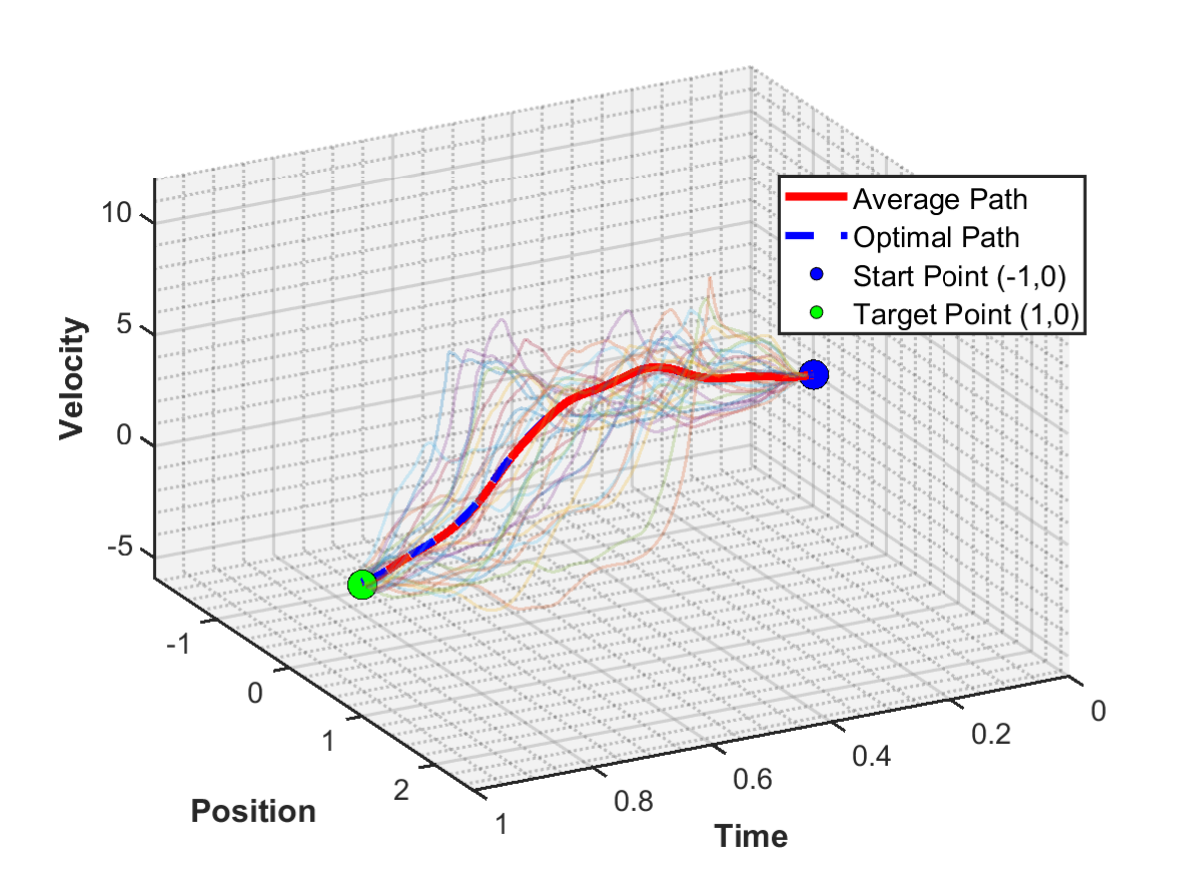}
        \caption{}
    \end{subfigure}
    
    \caption{The average path and the optimal path for \eqref{equation 2}, with $H=0.3, \sigma_t = 2 +  \cos(8\pi t)$ and $\gamma=0.1$}
    \label{example3}
\end{figure}

    \begin{figure}[htbp]
    \centering
    
    \begin{subfigure}[b]{0.9\textwidth}
        \centering
        \caption{}
        \includegraphics[width=\textwidth]{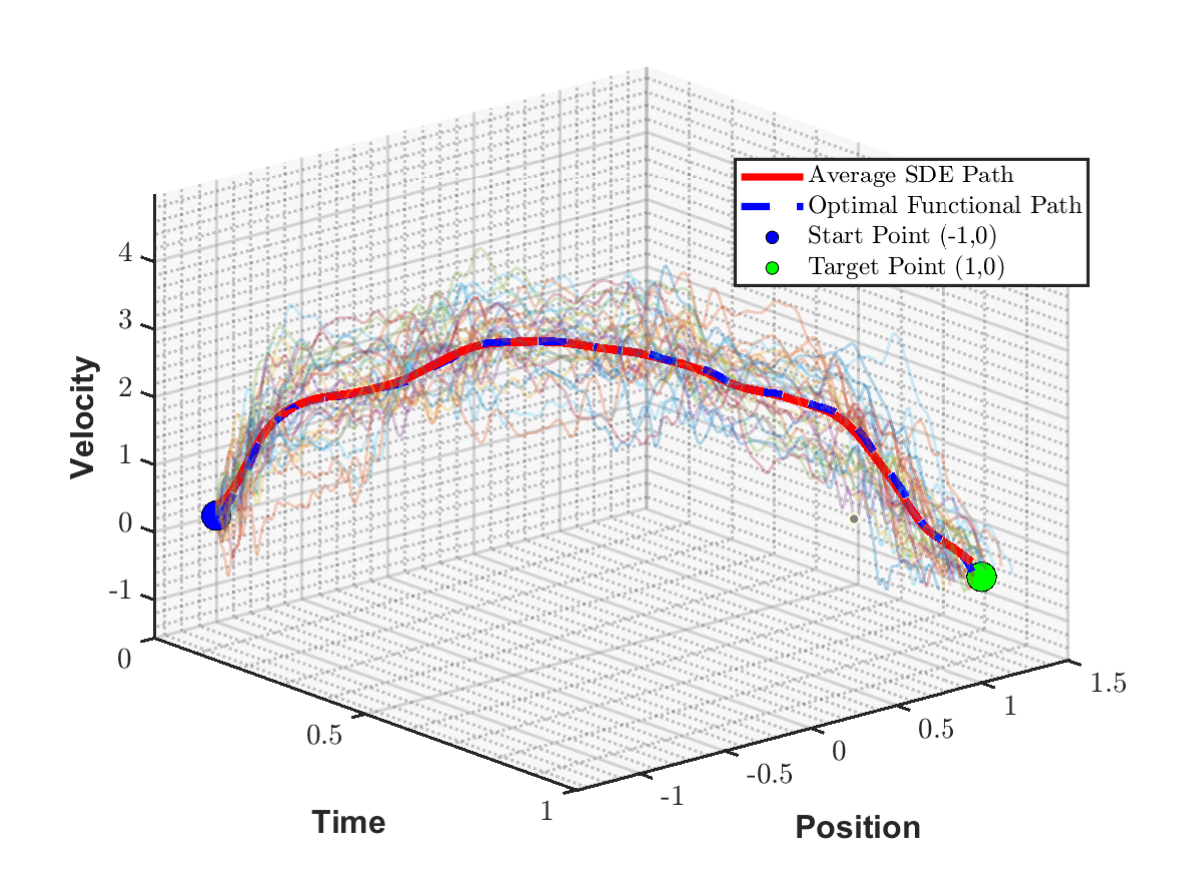}
    \end{subfigure}
    
    \vspace{0.5cm}
    
    \begin{subfigure}[b]{0.9\textwidth}
        \centering
        \caption{}
        \includegraphics[width=\textwidth]{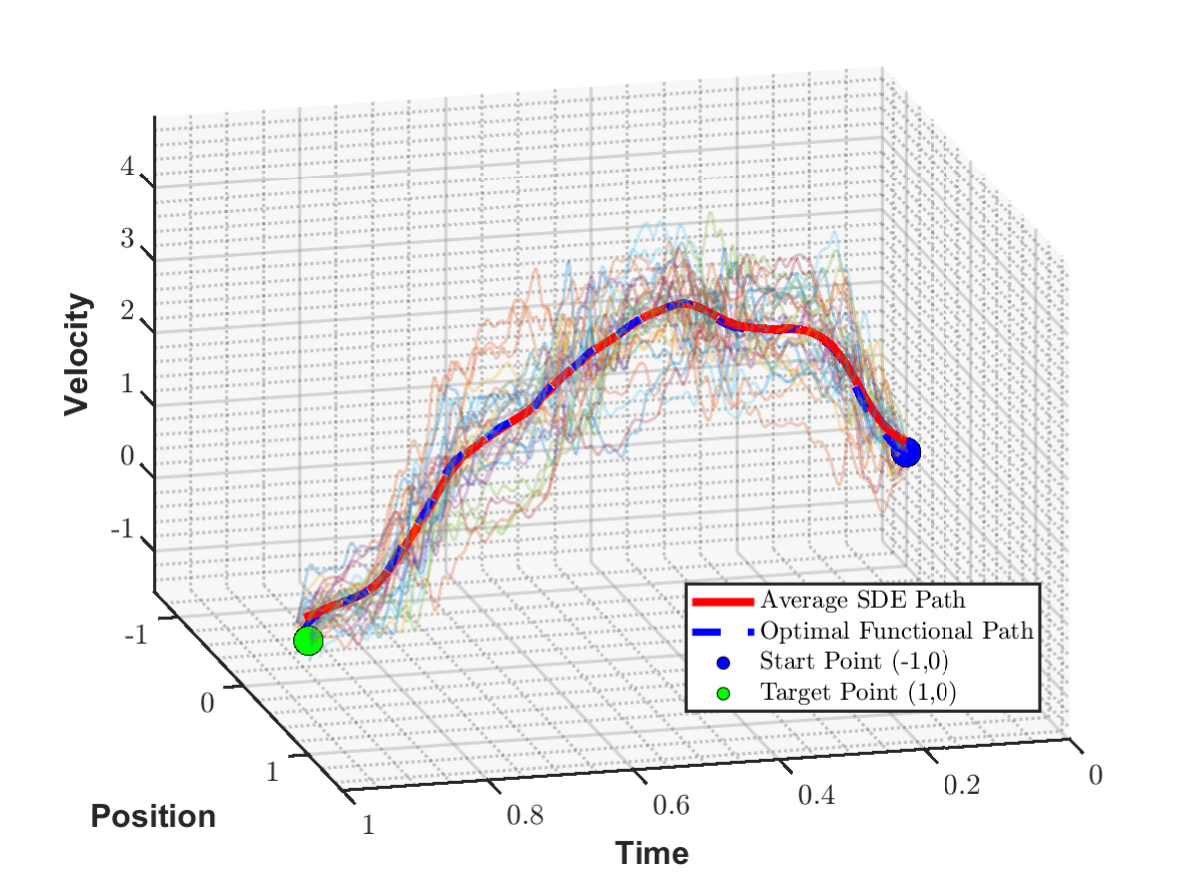}
    \end{subfigure}
    
    \caption{The average path and the optimal path for \eqref{equation 2}, with $H=0.5, \sigma_t = 2 +  \cos(8\pi t)$ and $\gamma=0.1$}
    \label{example4}
\end{figure}

\begin{figure}
    \centering
    \includegraphics[width=0.9\linewidth]{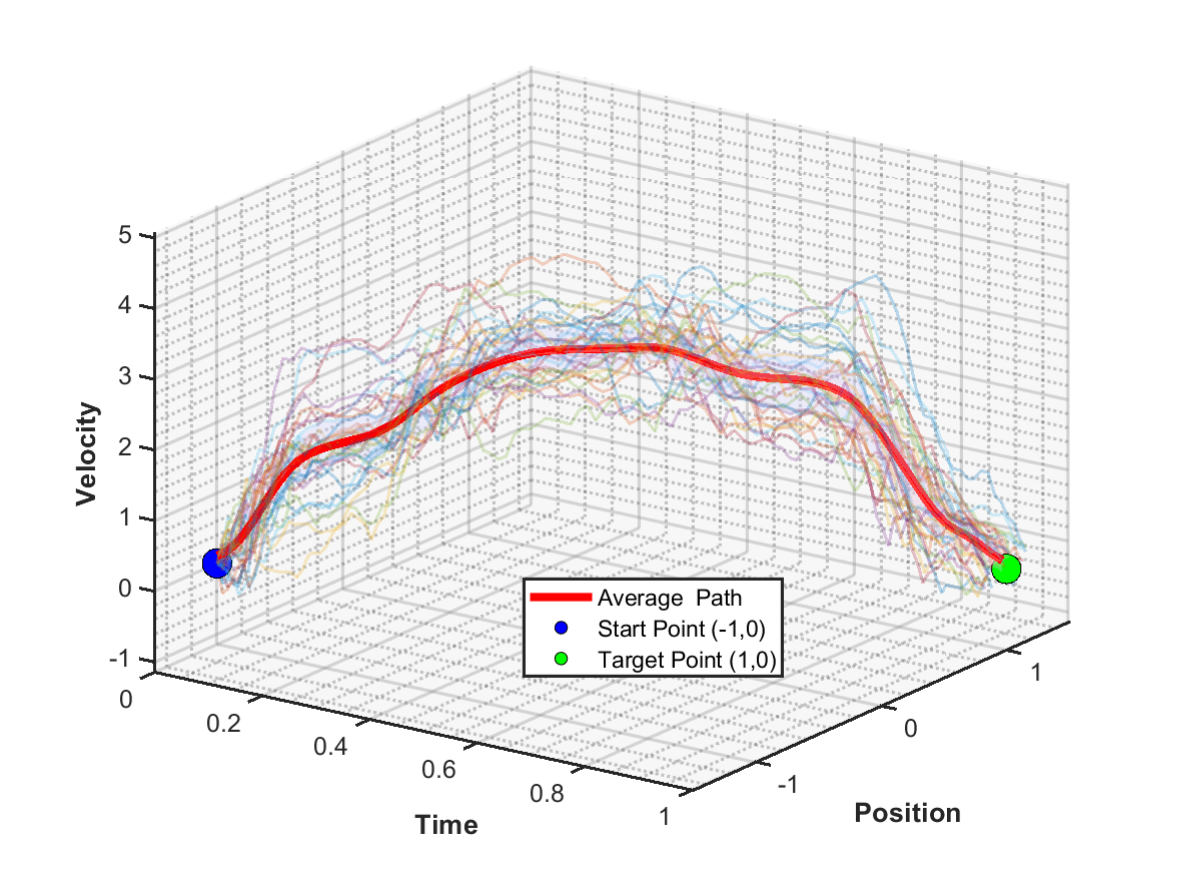}
    \caption{The average path  for \eqref{equation 2}, with $H=0.55, \sigma_t = 2 +  \cos(8\pi t)$ and $\gamma=0.1$}
    \label{example5}
\end{figure}

\section{Proof of the Main Results}

In this section, we will prove the main results and provide some tools necessary for the proofs. The main tools used in the proof in this paper remain the classical small ball probability estimates and the Girsanov theorem. We first present the small ball probability estimates, which are generalizations of classical results in small ball probability estimation, their proofs can be referred to in \cite{Nualart}.

	\begin{lemma}\label{estimate1}
If $0<\beta<H$, then 
\[
\lim _{\varepsilon \to 0} \mathbb{E}\left( \exp \left( \int_{0}^{1} h(s) \, d W_{s} \right) \mid \Vert\int_0^\cdot \sigma_s dB^H_s \Vert_\beta < \varepsilon \right) = 1,
\]
for all \( h \in L^{2}([0,1]) \).
\end{lemma}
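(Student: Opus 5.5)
We follow the classical scheme used for small ball conditional exponential moments (as in \cite{Nualart}). The key structure is that the event $A_\varepsilon=\{\|\int_0^\cdot\sigma_s\,dB^H_s\|_\beta<\varepsilon\}$ is symmetric under $W\mapsto -W$. We will use three facts:
\begin{enumerate}[label=(\roman*)]
\item $A_\varepsilon$ is symmetric and convex in $W$;
\item $A_\varepsilon$ shrinks to $\{W=0\}$ in a suitable sense;
\item Gaussian correlation/Anderson-type inequalities let us control conditional tails of Wiener integrals.
\end{enumerate}
It suffices, by a standard criterion (e.g. \cite[Lemma]{Nualart}), to show that for every $c\in\mathbb{R}$,
\[
\limsup_{\varepsilon\to 0}\mathbb{E}\Big(\exp\big(c\textstyle\int_0^1 h\,dW\big)\,\Big|\,A_\varepsilon\Big)\le 1 .
\]
The lower bound $\ge 1$ is automatic: by symmetry of $A_\varepsilon$ and Jensen, the conditional expectation is at least $\exp(\mathbb{E}(c\int h\,dW\mid A_\varepsilon))=1$.

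For the upper bound, we first reduce to $h$ in a dense class. Write $h=h_n+r_n$ with $\|r_n\|_{L^2}\to0$, and apply Hölder's inequality conditionally. The Gaussian correlation inequality (Royen), or Anderson's inequality for the symmetric convex set $A_\varepsilon$, gives
\[
\mathbb{E}\big(\exp(p\textstyle\int r_n\,dW)\mid A_\varepsilon\big)\le \mathbb{E}\exp\big(p\textstyle\int r_n\,dW\big)=e^{p^2\|r_n\|^2/2},
\]
since the conditional law of a Gaussian functional on a symmetric convex set is more concentrated (the function $e^{x}+e^{-x}$ is symmetric and increasing in $|x|$). This error term tends to $1$ as $n\to\infty$, uniformly in $\varepsilon$.

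For $h$ in the dense class, we express $\int_0^1 h\,dW$ as a continuous linear functional of the path $Z_t=\int_0^t\sigma_s\,dB^H_s$. Take $h=(K_H^*)(\sigma g)$ with $g$ smooth, or equivalently $h$ with $K_H$-image regular enough. Using the isometry of \cite{Biagini2008} and the Young integral (Lemma~\ref{young}), one gets
\[
\int_0^1 h\,dW=\int_0^1 g_s\,dZ_s = g_1Z_1-\int_0^1 Z_s\,dg_s ,
\]
so that $|\int_0^1 h\,dW|\le C_g\sup_t|Z_t|\le C_g\|Z\|_\beta$ (recall $Z_0=0$). On $A_\varepsilon$ this is at most $C_g\varepsilon$, hence the conditional expectation is at most $e^{|c|C_g\varepsilon}\to1$. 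Here $\beta<H$ guarantees $\mathbb{P}(A_\varepsilon)>0$, so the conditioning is legitimate.

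The main obstacle is the density step: we must show that the class $\{K_H^*(\sigma g): g\in C^1\}$ is dense in $L^2([0,1])$. This follows because $K_H^*$ has dense range (it is an isometry onto $L^2$ from $\mathcal H$, which contains the step/smooth functions densely) and multiplication by $\sigma$, with $0<m\le\sigma\le M$ and $\sigma\in C^1$, preserves smooth functions and $\mathcal H$-density. We must also verify that the integration-by-parts identity matches the Wiener integral for such $g$, which uses $\gamma+H>1$ and Lemma~\ref{young2}.
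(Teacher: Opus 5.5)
Your proposal is correct. It is the standard argument, which the paper does not write out but simply defers to \cite{Nualart}: symmetry of $A_\varepsilon$ plus Jensen for the lower bound, a Gaussian correlation inequality to control the $L^2$-approximation error uniformly in $\varepsilon$, and the integration by parts $\int_0^1 g\,dZ=g_1Z_1-\int_0^1 Z_s\,dg_s$ on the dense class $\{K_H^*(\sigma g): g\in C^1\}$. You also usefully make explicit the one place where $\sigma$ enters, namely that $\sigma\in C^1$ with $\sigma\ge m>0$ keeps this class dense.

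One small precision: Anderson's inequality by itself does not give $\mathbb{P}(|\xi|>t\mid A_\varepsilon)\le\mathbb{P}(|\xi|>t)$ for $\xi=\int_0^1 r_n\,dW$. The clean tool is the Khatri--\v{S}id\'ak inequality applied to the countably many symmetric slabs defining $A_\varepsilon$, or Anderson's monotonicity along rays combined with a Chebyshev association step; Royen's theorem also covers this case.
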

    We recall that an operator $K : L^2([0,1]) \to L^2([0,1])$ is \emph{nuclear} iff 
\[
\sum_{n=1}^\infty \big| \langle K e_n, g_n \rangle \big| < \infty,
\]
for all orthonormal sequences $B=(e_n)$, $B'=(g_n)$ in $L^2([0,1])$.  
 The trace of a nuclear operator $K$ is defined by
\[
\operatorname{Tr} K = \sum_{n=1}^\infty \langle K e_n, e_n \rangle,
\]
for any orthonormal sequence $B=(e_n)$ in $L^2([0,1])$.  
The definition is independent of the sequence we have chosen.

Given a symmetric function $f \in L^2([0,1]^2)$, the Hilbert--Schmidt operator 
$K(f) : L^2 \to L^2$ associated with $f$ is defined by
\begin{equation}\label{eq:Kf}
  (K(f)h)(t) = \int_0^t f(t,u) h(u)\,du.
\end{equation}
The operator $K(f)$ is nuclear iff
\[
\sum_{n=1}^\infty \big|\langle K e_n, e_n \rangle \big| < \infty
\quad \text{for all orthonormal sequences } (e_n) \subset L^2([0,1]).
\]

If $f$ is continuous and $K(f)$ is nuclear, we can compute its trace as follows:
\[
\operatorname{Tr} f := \operatorname{Tr} K(f) = \int_0^1 f(s,s)\,ds.
\]

	\begin{lemma}\label{estimate2}
Let \( f \) be a symmetric function in \( L^{2}([0,1]^{2}) \) . If \( K(f) \) is nuclear  and $0<\beta<H$, then 
\[
\lim _{\varepsilon \to 0} E\left( \exp \left( \int_{0}^{1} \int_{0}^{1} f(s, t) \, d W_{s} d W_{t} \right) \mid \Vert\int_0^\cdot \sigma_s dB^H_s \Vert_\beta < \varepsilon \right) = e^{-\text{Tr}(f)}.
\]
\end{lemma}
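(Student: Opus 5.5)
The plan is to reduce Lemma~\ref{estimate2} to the classical statement for Brownian small balls in a suitable norm, as in \cite{Nualart}. The conditioning event is
\[
\Big\{\Big\|\int_0^\cdot \sigma_s\,dB^H_s\Big\|_\beta<\varepsilon\Big\}
=\{\|K_H^\sigma(\cdot)(W)\|_\beta<\varepsilon\},
\]
where $\int_0^t\sigma_s\,dB^H_s=\int_0^t (K_H^*(\sigma\mathbf 1_{[0,t]}))(u)\,dW_u$ is a Gaussian linear functional of $W$. The argument follows the scheme of Shepp--Zeitouni / Moret--Nualart.

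\textbf{Step 1: spectral reduction.} Since $K(f)$ is nuclear and symmetric, write $f(s,t)=\sum_n \lambda_n e_n(s)e_n(t)$ with $\sum|\lambda_n|<\infty$. Then the double Wiener--It\^o integral becomes
\[
\int_0^1\int_0^1 f\,dW\,dW=\sum_n \lambda_n\big(\xi_n^2-1\big)+\text{(trace correction)},
\]
where $\xi_n=\int e_n\,dW$. With the paper's convention for iterated integrals (Stratonovich-type, matching $\operatorname{Tr}$), this should yield $\sum_n\lambda_n\xi_n^2-\operatorname{Tr}f$ up to sign. Hence it suffices to show
\[
\mathbb E\Big(\exp\big(\textstyle\sum_n\lambda_n\xi_n^2\big)\,\Big|\,A_\varepsilon\Big)\to 1,
\qquad A_\varepsilon:=\Big\{\Big\|\int_0^\cdot\sigma\,dB^H\Big\|_\beta<\varepsilon\Big\}.
\]

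\textbf{Step 2: finitely many modes.} Truncate to $N$ modes. For fixed $N$, the key property is that each $\xi_n\to 0$ in conditional probability given $A_\varepsilon$. To prove this, decompose $W$ into its projection on $e_n$ and an independent remainder. Then use the Anderson inequality and the symmetry and convexity of the Hölder ball to get
\[
\mathbb P(|\xi_n|>\delta\mid A_\varepsilon)\to 0 .
\]
This requires that $e_n\mapsto\int_0^\cdot\sigma\,d(K_He_n)=K_H^\sigma e_n$ lie in the $\beta$-Hölder space and that the small ball probabilities satisfy $\mathbb P(A_\varepsilon)\to0$ with the correct shift behaviour. Both hold because $K_H^\sigma e_n$ is $H$-Hölder with $\beta<H$ (by Lemma~\ref{young2} and $\sigma\in C^1$). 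Together with the uniform exponential moment bounds from the correlation (Gaussian correlation / Anderson) inequality, this gives the conclusion on the first $N$ modes, exactly as in Lemma~\ref{estimate1}.

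\textbf{Step 3: the tail (main obstacle).} The remaining task is to control the tail $\sum_{n>N}\lambda_n\xi_n^2$ uniformly in $\varepsilon$. I would use Hölder's inequality to split the exponential. For the tail I would bound
\[
\mathbb E\Big(\exp\big(p\textstyle\sum_{n>N}|\lambda_n|\xi_n^2\big)\,\Big|\,A_\varepsilon\Big)\le \mathbb E\Big(\exp\big(p\textstyle\sum_{n>N}|\lambda_n|\xi_n^2\big)\Big),
\]
which holds by the Gaussian correlation inequality, since both events are symmetric convex. The right-hand side equals $\prod_{n>N}(1-2p|\lambda_n|)^{-1/2}\to1$ as $N\to\infty$, by nuclearity.

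Passing $\varepsilon\to0$ and then $N\to\infty$ concludes. The delicate points are the convexity and symmetry needed to invoke the correlation inequality with the $\sigma$-weighted fBm norm, and the identification of the trace term as $\int_0^1 f(s,s)\,ds$. The time-dependent $\sigma$ enters only through the linear map $W\mapsto\int\sigma\,dB^H$, so the classical argument transfers once these structural properties are checked.
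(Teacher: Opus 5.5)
The paper gives no proof of this lemma; it only defers to Moret--Nualart. Your architecture follows that classical Shepp--Zeitouni/Moret--Nualart scheme: the chaos expansion $I_2(f)=\sum_n\lambda_n\xi_n^2-\operatorname{Tr}f$, control of finitely many directions, and a correlation inequality for a remainder. Your Step~3 is sound: Royen's Gaussian correlation inequality gives $\mathbb{E}[F\,|\,A_\varepsilon]\le\mathbb{E}[F]$ for $F=\exp\bigl(q\sum_{n>N}|\lambda_n|\xi_n^2\bigr)$.

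\textbf{The gap is in Step~2, in two places.}

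\emph{Concentration of $\xi_n$.} Anderson's inequality and the symmetry and convexity of $A_\varepsilon$ only show that conditioning makes the law of $\xi_n$ more peaked than $N(0,1)$; they do not make it collapse to $0$. The ``correct shift behaviour'' you invoke is exactly what must be proved, and the $H$-H\"older regularity of $K_H^\sigma e_n$ is not why it holds. What drives it is a dense set of directions in which $\int h\,dW$ is a $\|\cdot\|_\beta$-continuous functional of $U=\int_0^\cdot\sigma\,dB^H$. For $g\in C^1$ and $h=K_H^*(\sigma g)$ one has $\int_0^1h\,dW=\int_0^1g\,dU=g(1)U_1-\int_0^1U_sg'(s)\,ds$, so $|\int_0^1h\,dW|\le C_g\varepsilon$ on $A_\varepsilon$. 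Alternatively, a Cameron--Martin argument works, using that $K_H^\sigma e_n$ has infinite Cameron--Martin norm for the remainder process.

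\emph{Uniform integrability (the more serious point).} The eigenvalues of $f$ are arbitrary. If $p\lambda_n\ge\tfrac12$, then $\mathbb{E}\exp(p\lambda_n\xi_n^2)=\infty$, so the correlation bound gives no uniform integrability. Convergence in conditional probability then does not yield $\mathbb{E}[\exp(p\lambda_n\xi_n^2)\,|\,A_\varepsilon]\to1$. Lemma~\ref{estimate1} does not help either, since it controls only linear exponentials.

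\textbf{Fix.} Do not truncate the eigen-expansion, since that leaves finitely many possibly large modes in directions you cannot control. Instead, approximate $f$ in trace norm by $f_N=\sum_{i,j\le N}a_{ij}\,h_i\otimes h_j$ with $h_i=K_H^*(\sigma g_i)$ and $g_i\in C^1$; this is possible because such $h$ are dense in $L^2([0,1])$. Set $\eta_i=\int h_i\,dW$. Then $I_2(f_N)=\sum a_{ij}\eta_i\eta_j-\operatorname{Tr}f_N$ with $|\eta_i|\le C\varepsilon$ on $A_\varepsilon$. Hence $\mathbb{E}[e^{pI_2(f_N)}\,|\,A_\varepsilon]\to e^{-p\operatorname{Tr}f_N}$ by a deterministic bound. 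The remainder $f-f_N$ has small trace norm, so all its eigenvalues are small. Your Step~3 argument then applies to it verbatim, for any H\"older exponent $q$, once $N$ is large.

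\textbf{Step~1 bookkeeping.} The trace comes from the It\^o (Wiener chaos) convention: $I_2(e_n\otimes e_n)=\xi_n^2-1$, so $I_2(f)=\sum_n\lambda_n\xi_n^2-\operatorname{Tr}f$ exactly, with this sign and no ``up to sign'' hedge. A Stratonovich convention would remove the $-1$'s and give the limit $1$. Your remark that the convention is ``Stratonovich-type, matching $\operatorname{Tr}$'' therefore has it backwards, even though the formula you end up using is correct.
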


\begin{lemma}
Let $ \sigma $ satisfy Assumption \((A)\), let $U_t^H=\int_0^t \sigma_vdB^H_v$.
Then for any $0 < \beta < H$, there exists a constant $M_\beta > 0$ such that
\begin{equation}
\mathbb{P}\Bigg(
\sup_{0 \leq s \neq t \leq 1} \frac{|U^H_t - U^H_s|}{|t-s|^\beta} \leq \varepsilon
\Bigg) 
\geq \exp\Big(\varepsilon^{-\frac{1}{H-\beta}} M_\beta \Big). \label{smbp}
\end{equation}
\end{lemma}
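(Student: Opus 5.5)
The plan is to reduce the estimate to the classical small ball bound for fractional Brownian motion itself in the Hölder seminorm. The estimate \eqref{smbp} only makes sense with a minus sign in the exponent, i.e. $\exp(-M_\beta\,\varepsilon^{-1/(H-\beta)})$, and that is what I will aim to prove.

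\emph{Step 1: integration by parts.} Since $\sigma\in C^1([0,1])$ by Assumption (A) and $B^H\in C^{H-\delta}$ almost surely, Lemma~\ref{young} gives, pathwise and with $B^H_0=0$,
\[
U^H_t=\sigma_t B^H_t-\int_0^t B^H_v\,\sigma'_v\,dv .
\]
This expresses $U^H$ as a continuous, essentially linear functional of the path $B^H$.

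\emph{Step 2: a deterministic Hölder comparison.} For $0\le s<t\le 1$ I write
\[
U^H_t-U^H_s=\sigma_t\,(B^H_t-B^H_s)+B^H_s\,(\sigma_t-\sigma_s)-\int_s^t B^H_v\,\sigma'_v\,dv .
\]
I then use three bounds:
\begin{itemize}
\item $|\sigma_t|\le M$;
\item $|\sigma_t-\sigma_s|\le \|\sigma'\|_\infty|t-s|\le \|\sigma'\|_\infty|t-s|^\beta$;
\item $\sup_v|B^H_v|\le [B^H]_\beta$, because $B^H_0=0$ and the time interval is $[0,1]$.
\end{itemize}
Together these yield
\[
[U^H]_\beta\le C_\sigma\,[B^H]_\beta,\qquad C_\sigma:=M+2\|\sigma'\|_\infty .
\]
Hence
\[
\mathbb{P}\bigl(\|U^H\|_\beta\le\varepsilon\bigr)\ \ge\ \mathbb{P}\bigl(\|B^H\|_\beta\le \varepsilon/C_\sigma\bigr).
\]

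\emph{Step 3: the fBm small ball estimate.} I will invoke the known result (Kuelbs--Li--Shao, Stolz) that for $0<\beta<H$ there is $c_\beta>0$ with
\[
-\log \mathbb{P}\bigl(\|B^H\|_\beta\le\varepsilon\bigr)\le c_\beta\,\varepsilon^{-1/(H-\beta)}\qquad\text{for small }\varepsilon .
\]
Substituting $\varepsilon/C_\sigma$ gives the claim with $M_\beta=c_\beta\,C_\sigma^{1/(H-\beta)}$. For $\varepsilon$ not small the probability is bounded below by a positive constant, since the law of $B^H$ charges every Hölder ball. Enlarging $M_\beta$ then makes the bound hold for all $\varepsilon\in(0,1]$.

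The main obstacle is Step 3 if a self-contained argument is wanted rather than a citation. In that case I would expand $B^H$ in the Ciesielski (Schauder) basis, in which the Hölder seminorm is equivalent to a weighted sup of coefficients. The coefficients are Gaussian with variance of order $2^{-2nH}$ at level $n$. The coefficients are correlated for $H\ne\tfrac12$, so I would handle this with Anderson's inequality or the Khatri--Šidák inequality, which gives a lower bound by the product of one-dimensional probabilities. Counting the levels $n$ with $2^{-n(H-\beta)}\gtrsim\varepsilon$ then produces exactly the exponent $\varepsilon^{-1/(H-\beta)}$. The levels above that threshold contribute a factor bounded below uniformly in $\varepsilon$.
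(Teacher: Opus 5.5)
Your argument is correct, and you are right that the bound needs a minus sign in the exponent: as printed, the right-hand side exceeds $1$. The paper gives no proof of this lemma. It only defers to \cite{Nualart}, which covers the classical case $\sigma\equiv 1$, so the passage from $B^H$ to $U^H$ is left implicit there.

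Your Steps 1--2 supply exactly that missing step. The pathwise comparison $[U^H]_\beta\le (M+2\|\sigma'\|_\infty)[B^H]_\beta$ reduces everything to the Kuelbs--Li--Shao/Stolz bound, with $M_\beta=c_\beta C_\sigma^{1/(H-\beta)}$. The extension from small $\varepsilon$ to $\varepsilon\in(0,1]$ by monotonicity is fine.

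This route genuinely uses the Lipschitz control that $\sigma\in C^1$ provides. Under only a Young-type condition $\sigma\in C^\gamma$ with $\gamma+H>1$, the remainder $\int_s^t(\sigma_v-\sigma_s)\,dB^H_v$ would require a H\"older norm of $B^H$ of order above $1-\gamma$. That norm is not controlled by $[B^H]_\beta$ when $\beta+\gamma\le 1$.

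In return, the argument gives more than the lemma. Since $\sigma\ge m>0$, we also have $1/\sigma\in C^1$, and $B^H_t=\sigma_t^{-1}U^H_t-\int_0^t U^H_v\,(\sigma_v^{-1})'\,dv$. The same computation then yields the reverse bound $[B^H]_\beta\le (1/m+2\|\sigma'\|_\infty/m^2)[U^H]_\beta$. So the events $\{\|U^H\|_\beta\le\varepsilon\}$ and $\{\|B^H\|_\beta\le\varepsilon\}$ are nested inside each other up to constant rescalings of $\varepsilon$. This is convenient wherever the paper conditions on $\|U^H\|_\beta$ rather than $\|B^H\|_\beta$.

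One small correction to your optional self-contained sketch. Let $n_\varepsilon$ be the level where $2^{-n_\varepsilon(H-\beta)}\approx\varepsilon$. The levels above $n_\varepsilon$ do not contribute a factor bounded below uniformly in $\varepsilon$. Level $n_\varepsilon+1$ alone has $2^{n_\varepsilon+1}$ coefficients, each confined to a window of a fixed number of standard deviations, so their Khatri--\v{S}id\'ak product is of order $\exp(-c\,2^{n_\varepsilon})=\exp(-c\,\varepsilon^{-1/(H-\beta)})$. Summing over all higher levels still gives $\exp(-O(\varepsilon^{-1/(H-\beta)}))$, the same order as the main term, so your conclusion is unaffected.
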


\begin{lemma}[Girsanov's Theorem, {\cite[Theorem~8.6]{Oksendal2014}}]\label{lem:girsanov}
Let $(\Omega,\mathcal{F},\{\mathcal{F}_t\}_{t\ge0},\mathbb{P})$ be a filtered probability space, and let 
$W=\{W_t\}_{t\ge0}$ be a $d$-dimensional Brownian motion. 
Suppose that $u=\{u_t\}_{t\ge0}$ is an $\{\mathcal{F}_t\}$-adapted process such that the following Novikov condition holds:
\begin{equation}
    \mathbb{E}\left[\exp\left(\tfrac{1}{2}\int_0^T |u_s|^2\,ds\right)\right] < \infty      \label{novikov}
\end{equation}
for every $T>0$. Define
\[
Z_t = \exp\left(-\int_0^t u_s \cdot dW_s - \tfrac{1}{2}\int_0^t |u_s|^2\,ds\right), 
\qquad t \in [0,T].
\]
Then $\{Z_t\}$ is a martingale under $\mathbb{P}$. If we define a new probability measure 
$\mathbb{Q}$ on $(\Omega,\mathcal{F}_T)$ by
\[
\frac{d\mathbb{Q}}{d\mathbb{P}}\Big|_{\mathcal{F}_T} = Z_T,
\]
then the process
\[
\widetilde{W}_t = W_t + \int_0^t u_s\,ds, \qquad t \in [0,T]
\]
is a $d$-dimensional Brownian motion with respect to $\mathbb{Q}$.
\end{lemma}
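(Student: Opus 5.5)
This statement is the standard Girsanov theorem, which the paper quotes from \cite[Theorem~8.6]{Oksendal2014} and does not prove. I will follow the classical route: first show that $Z$ is a true martingale, then identify the law of $\widetilde W$ under $\mathbb{Q}$ through Lévy's characterization.

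\emph{Step 1: $Z$ is a martingale.} By Itô's formula, $Z_t = 1 - \int_0^t Z_s u_s\cdot dW_s$. Hence $Z$ is a positive local martingale, and by Fatou's lemma it is a supermartingale with $\mathbb{E} Z_T \le 1$. The true-martingale property, $\mathbb{E} Z_T = 1$, is the main obstacle, and this is where Novikov's condition \eqref{novikov} enters. I will argue in Kazamaki's style.

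\begin{enumerate}
\item Set $M_t = -\int_0^t u_s\cdot dW_s$. For $0<a<1$, write $\mathcal{E}(aM)_T = \mathcal{E}(M)_T^{a^2}\, e^{a(1-a)M_T}$.
\item Apply Hölder's inequality with exponents $1/a^2$ and $1/(1-a^2)$. This bounds $\mathbb{E}\,\mathcal{E}(aM)_{\tau}$ uniformly over stopping times $\tau \le T$.
\item To control the factor $e^{a(1-a)M_T}$, use Cauchy--Schwarz together with \eqref{novikov}. This gives $\mathbb{E}\, e^{\frac12 M_T} \le (\mathbb{E} e^{\frac12\langle M\rangle_T})^{1/2} < \infty$.
\item Conclude that $\mathcal{E}(aM)$ is uniformly integrable, hence a martingale, so $\mathbb{E}\,\mathcal{E}(aM)_T=1$.
\item Letting $a\uparrow 1$, the same Hölder bound yields $1 \le \mathbb{E}[\mathcal{E}(M)_T]^{a^2}\,(\mathbb{E} e^{\frac12 M_T})^{2a(1-a)}$, and therefore $\mathbb{E} Z_T \ge 1$.
\end{enumerate}

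Combined with $\mathbb{E} Z_T\le1$, this gives $\mathbb{E} Z_T = 1$, and a supermartingale with constant expectation is a martingale. Consequently $\mathbb{Q}$ is a probability measure on $\mathcal{F}_T$, and $\mathbb{Q}|_{\mathcal{F}_t}=Z_t\,\mathbb{P}$ for every $t\le T$.

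\emph{Step 2: identifying $\widetilde W$.} By Bayes' rule, a continuous adapted process $X$ is a $\mathbb{Q}$-local martingale if and only if $XZ$ is a $\mathbb{P}$-local martingale. Take a component $X=\widetilde W^i$. Itô's product rule gives
\[
d(\widetilde W^i Z) = Z\,dW^i + Z u^i\,dt + \widetilde W^i\,dZ + d\langle \widetilde W^i, Z\rangle,
\]
where $d\langle \widetilde W^i, Z\rangle = -Z u^i\,dt$. The drift terms cancel, so $\widetilde W^i Z$ is a $\mathbb{P}$-local martingale. The same computation applies to $\widetilde W^i\widetilde W^j-\delta_{ij}t$. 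Indeed, the quadratic covariation $\langle \widetilde W^i,\widetilde W^j\rangle_t=\delta_{ij}t$ is unchanged by the absolutely continuous change of measure and the finite-variation drift.

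Under $\mathbb{Q}$, $\widetilde W$ is therefore a continuous local martingale with $\langle \widetilde W^i,\widetilde W^j\rangle_t=\delta_{ij}t$. By Lévy's characterization it is a $d$-dimensional $\mathbb{Q}$-Brownian motion on $[0,T]$.
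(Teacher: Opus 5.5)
The paper gives no proof of this lemma; it only cites a textbook. Your route is the standard one, and most of it is correct:
\begin{itemize}
\item Step 2 is fine: Bayes' rule, the It\^o product computation in which $Zu^i\,dt$ cancels against $d\langle \widetilde W^i,Z\rangle=-Zu^i\,dt$, invariance of brackets under $\mathbb{Q}\ll\mathbb{P}$, and L\'evy's characterization are the usual argument.
\item Step 1 additionally proves Novikov's criterion in Kazamaki's style, which textbooks often import from elsewhere.
\item Item 5 is correct. H\"older produces the factor $e^{\frac{a}{1+a}M_T}$, and Jensen with $\frac{2a}{1+a}<1$ turns its $(1-a^2)$-th power into $(\mathbb{E}\,e^{\frac12 M_T})^{2a(1-a)}$.
\end{itemize}

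The step that does not close as written is the passage from item 2 to item 4. A uniform bound on $\mathbb{E}\,\mathcal{E}(aM)_\tau$ over stopping times is vacuous. $\mathcal{E}(aM)$ is a positive supermartingale, so $\mathbb{E}\,\mathcal{E}(aM)_\tau\le 1$ holds anyway, and an $L^1$ bound does not imply uniform integrability. You need to apply H\"older on an arbitrary event $\Gamma$:
\[
\mathbb{E}\bigl[\mathbf{1}_\Gamma\,\mathcal{E}(aM)_\tau\bigr]
\le\bigl(\mathbb{E}\,\mathcal{E}(M)_\tau\bigr)^{a^2}\bigl(\mathbb{E}\bigl[\mathbf{1}_\Gamma\,e^{\frac{a}{1+a}M_\tau}\bigr]\bigr)^{1-a^2}
\le\bigl(\mathbb{E}\bigl[\mathbf{1}_\Gamma\,e^{\frac{a}{1+a}M_\tau}\bigr]\bigr)^{1-a^2}.
\]
Then prove that $\{e^{\frac{a}{1+a}M_\tau}\}_{\tau\le T}$ is uniformly integrable. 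This is exactly where $a<1$ is used: $(e^{\frac{a}{1+a}M_\tau})^{p}=e^{\frac12 M_\tau}$ with $p=\frac{1+a}{2a}>1$. So the family is bounded in $L^{p}$ as soon as $\sup_{\tau\le T}\mathbb{E}\,e^{\frac12 M_\tau}<\infty$.

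Consequently, item 3 must hold uniformly over stopping times $\tau\le T$, not only at $T$. Your Cauchy--Schwarz argument gives this once you add two facts:
\begin{itemize}
\item optional stopping for the nonnegative supermartingale $\mathcal{E}(M)$, which gives $\mathbb{E}\,\mathcal{E}(M)_\tau\le 1$ (this is also what justifies the second inequality above);
\item $\langle M\rangle_\tau\le\langle M\rangle_T=\int_0^T|u_s|^2\,ds$.
\end{itemize}
With this, $\mathcal{E}(aM)$ is of class (D) on $[0,T]$, hence a true martingale with $\mathbb{E}\,\mathcal{E}(aM)_T=1$, and the rest of your proof goes through. Alternatively, for fixed $a<1$, a second H\"older splitting gives $\sup_{\tau\le T}\mathbb{E}\,\mathcal{E}(aM)_\tau^{p}<\infty$ for some $p>1$ directly from Novikov's condition.
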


   \begin{lemma}[ Stochastic Fubini theorem,{\cite[Theorem 2.2]{Veraar01082012}}]\label{fubini}
   Let \((X, \Sigma, \mu)\) be a \(\sigma\)-finite measure space. Let \(\varphi: X \times [0,T] \times \Omega \to \mathbb{R}\) be progressively measurable, and suppose that for almost every \(\omega \in \Omega\),
\[
\int_X \left( \int_0^T |\varphi(x,t,\omega)|^2 \, dt \right)^{1/2} \, d\mu(x) < \infty.
\]
   	Then for almost all $\omega\in \Omega$ and for all $t\in [0,T]$, we have
   	\begin{equation*}
   		\int_X \int_0^T \varphi(x,t,\omega) dW_t d\mu(x)=\int_0^T \int_X \varphi(x,t,\omega) d\mu(x) dW_t .
   	\end{equation*}
   	  \end{lemma}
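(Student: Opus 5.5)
The plan is a density argument. The identity is trivial for elementary integrands. Both sides extend continuously in a suitable norm, and the pathwise hypothesis is handled by localization.

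\emph{Step 1 (reductions and localization).} Since $\mu$ is $\sigma$-finite, write $X=\bigcup_k X_k$ with $\mu(X_k)<\infty$. It then suffices to treat the case $\mu$ finite and pass to the limit with dominated convergence; the hypothesis supplies the dominating function, as in Step 3. The hypothesis only holds pathwise, so I would introduce
\[
\tau_n=\inf\Big\{t\in[0,T]:\ \int_X\Big(\int_0^t|\varphi(x,s)|^2ds\Big)^{1/2}d\mu(x)\ge n\Big\}\wedge T .
\]
The map $t\mapsto\int_X(\int_0^t|\varphi|^2)^{1/2}d\mu$ is continuous and adapted, so each $\tau_n$ is a stopping time. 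By hypothesis $\tau_n=T$ for $n$ large, almost surely. Replacing $\varphi$ by $\varphi\mathbf 1_{[0,\tau_n]}$, we may assume
\[
\|\varphi\|:=\mathbb E\int_X\Big(\int_0^T|\varphi(x,t)|^2dt\Big)^{1/2}d\mu(x)<\infty .
\]
Let $\mathcal V$ denote the space of progressively measurable $\varphi$ with $\|\varphi\|<\infty$.

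\emph{Step 2 (both sides are continuous on $\mathcal V$).} I would apply the Burkholder--Davis--Gundy inequality with $p=1$ for each fixed $x$:
\[
\mathbb E\sup_{t\le T}\Big|\int_0^t\varphi(x,s)\,dW_s\Big|\le C\,\mathbb E\Big(\int_0^T|\varphi(x,s)|^2ds\Big)^{1/2}.
\]
Integrating in $x$ shows that the left-hand side of the statement is well defined, and that it is bounded by $C\|\varphi\|$ in $L^1(\Omega;C[0,T])$. This requires a jointly measurable version of $(x,t,\omega)\mapsto\int_0^t\varphi(x,s)dW_s$, which I would build by approximation as in Step 3. For the right-hand side, Minkowski's integral inequality gives
\[
\Big(\int_0^T\Big|\int_X\varphi(x,t)\,d\mu(x)\Big|^2dt\Big)^{1/2}\le\int_X\Big(\int_0^T|\varphi(x,t)|^2dt\Big)^{1/2}d\mu(x).
\]
Hence $t\mapsto\int_X\varphi\,d\mu$ is a progressively measurable $L^2$ integrand, and BDG again gives the bound $C\|\varphi\|$ for the right-hand side.

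\emph{Step 3 (elementary integrands and density).} For
\[
\varphi(x,t,\omega)=\sum_{i,j}\mathbf 1_{A_i}(x)\,\xi_{ij}(\omega)\,\mathbf 1_{(t_j,t_{j+1}]}(t),
\]
with $A_i\in\Sigma$ of finite measure and $\xi_{ij}$ bounded and $\mathcal F_{t_j}$-measurable, both sides equal $\sum_{i,j}\mu(A_i)\xi_{ij}(W_{t\wedge t_{j+1}}-W_{t\wedge t_j})$. It remains to show that such functions are dense in $\mathcal V$. This space is $L^1(\Omega\times X;L^2(0,T))$ restricted to progressive elements. I would first approximate $\varphi$ by functions that are simple in $x$ with values in the progressive $L^1(\Omega;L^2(0,T))$-type space, using the standard density of simple functions in Bochner $L^1$. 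Then each such value is approximated by elementary adapted processes, as in the usual construction of the Itô integral, combined with truncation to get bounded coefficients.

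The main obstacle is this density and measurability step. One must keep progressive measurability in $(t,\omega)$ while approximating in $x$, and simultaneously produce a version of the $x$-indexed stochastic integrals that is jointly measurable. I would handle both at once. Taking a fast Cauchy subsequence of elementary approximants $\varphi_k$, Step 2 gives almost sure convergence in $L^1(X;C[0,T])$. The limit then serves as the measurable version, and equality of the two sides passes to the limit. Finally, letting $n\to\infty$ in the localization completes the proof.
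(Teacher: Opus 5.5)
The paper gives no proof of this lemma; it quotes Veraar's Theorem~2.2. Your outline is a correct, self-contained proof along the standard lines, which are also those of the cited source:
\begin{itemize}
\item localize by stopping times to reach $\mathbb{E}\int_X\|\varphi(x,\cdot)\|_{L^2(0,T)}\,d\mu(x)<\infty$;
\item note that the It\^o integral is bounded from the progressive part $E$ of $L^1(\Omega;L^2(0,T))$ into $L^1(\Omega;C([0,T]))$, by the Davis ($p=1$ BDG) inequality;
\item control the $x$-integrated integrand by Minkowski;
\item use a jointly measurable version of the parametrized integrals.
\end{itemize}
Your density argument is in effect a hands-on proof that this bounded operator commutes with integration over $X$.

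The one step you assert rather than prove is the one you yourself call the main obstacle. You need approximants $\sum_i\mathbf 1_{A_i}(x)\psi_i$ with $\psi_i\in E$ converging to $\varphi$ in $\mathcal V$. Your fast-Cauchy-subsequence device builds the measurable version, but it presupposes these approximants, so it does not settle both issues at once.

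The fix is standard:
\begin{itemize}
\item Since $L^2(0,T)$ is separable, Pettis' theorem puts the localized $\varphi$ in $L^1(X\times\Omega;L^2(0,T))$.
\item The canonical isometry $L^1(X\times\Omega;F)\cong L^1(X;L^1(\Omega;F))$ then shows that $x\mapsto\varphi(x,\cdot,\cdot)$ is strongly measurable into $L^1(\Omega;L^2(0,T))$.
\item Every section is progressive, so it lies in the closed subspace $E$. Hence the map is strongly measurable into $E$, and the usual construction gives $E$-valued simple approximants.
\end{itemize}
Do not instead approximate by rectangles $\mathbf 1_{A\times B}$ and then project onto adapted processes. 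The projection $\psi\mapsto\mathbb{E}[\psi(t)\mid\mathcal F_t]$ is unbounded on $L^1(\Omega;L^2(0,T))$, because Stein's inequality fails at $p=1$.

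Once you have these approximants, the identity for them is just linearity of the It\^o integral. Your further reduction to elementary adapted processes, and the reduction to finite $\mu$ in Step 1, can then be dropped.
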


 \begin{lemma}[\cite{inequality}, Lemma 2.1]
    Let $(\Omega,\mathcal{F},P)$ be a probability triple and let $\{M_t\}_{t\geq 0}$ be a locally square integrable martingale w.r.t. the filtration 
    $\{\mathcal{F}_t\}_{t\geq 0}$, $M_0=0.$ 
    Let \(\langle M\rangle_t\) denote the quadratic variation of \(\{M_t\}\). Suppose that $|\Delta M_t|=|M_t-M_{t^-}|\leq K$ for all $t>0$, $0\leq K<\infty.$
    Then for each $a>0$, $b>0$,
    \begin{equation*}
        P(M_t\geq a \ \text{and} \ \langle M\rangle_t \leq b^2 \ \text{for some} \ t)
        \leq \exp\left(-\frac{a^2}{2(aK+b^2)}\right).
    \end{equation*}
    \label{exponential_ineq} 
\end{lemma}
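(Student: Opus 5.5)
The plan is the classical exponential-supermartingale argument of Freedman–Lepingle type. Throughout, $\langle M\rangle$ is read as the predictable quadratic variation, that is, the compensator of $[M]$; in the continuous case used later in the paper it is just the usual bracket. For $\lambda>0$ with $\lambda K<1$ put
\[
\psi_K(\lambda)=\frac{e^{\lambda K}-1-\lambda K}{K^2}\quad (\psi_0(\lambda)=\lambda^2/2),\qquad Z_t=\exp\bigl(\lambda M_t-\psi_K(\lambda)\langle M\rangle_t\bigr).
\]

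\textbf{Step 1: $Z$ is a positive local supermartingale.} I would apply It\^o's formula for semimartingales to $Z$. The continuous martingale part is harmless. Each jump contributes $e^{\lambda\Delta M}-1-\lambda\Delta M$. Since $x\mapsto (e^x-1-x)/x^2$ is increasing and $\Delta M\le K$, this contribution is at most $\psi_K(\lambda)(\Delta M)^2$. Together with the continuous part, the drift of $Z$ is therefore dominated by $Z_{-}\,\psi_K(\lambda)\,d[M]$, minus the compensating term $Z_{-}\,\psi_K(\lambda)\,d\langle M\rangle$. Passing to compensators gives a nonincreasing drift. A positive local supermartingale is a true supermartingale by Fatou, so $\mathbb{E}Z_\tau\le 1$ for every stopping time $\tau$. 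This is the step I expect to need the most care: justifying that the jump term is compensated by $\langle M\rangle$ rather than $[M]$.

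\textbf{Step 2: stopping.} Let $\tau=\inf\{t: M_t\ge a\}$. On the event
\[
A=\{M_t\ge a,\ \langle M\rangle_t\le b^2 \text{ for some } t\}
\]
we have $\tau<\infty$ and $M_\tau\ge a$ by right-continuity. Since $\langle M\rangle$ is nondecreasing and $\tau\le t$, also $\langle M\rangle_\tau\le b^2$. Hence on $A$,
\[
Z_\tau\ge \exp\bigl(\lambda a-\psi_K(\lambda)b^2\bigr).
\]
Applying Step 1 with $\tau\wedge n$ and letting $n\to\infty$ via Fatou gives
\[
P(A)\le \exp\bigl(-\lambda a+\psi_K(\lambda)b^2\bigr).
\]

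\textbf{Step 3: optimizing $\lambda$.} Expanding the series and using $k!\ge 2$ for $k\ge2$ gives the Bernstein bound
\[
\psi_K(\lambda)=\sum_{k\ge2}\frac{\lambda^kK^{k-2}}{k!}\le \frac{\lambda^2}{2(1-\lambda K)}.
\]
Choose $\lambda=a/(aK+b^2)$. Then $1-\lambda K=b^2/(aK+b^2)$, so
\[
\psi_K(\lambda)b^2\le \frac{a^2}{2(aK+b^2)}.
\]
The exponent is therefore at most
\[
-\frac{a^2}{aK+b^2}+\frac{a^2}{2(aK+b^2)}=-\frac{a^2}{2(aK+b^2)},
\]
which is the stated bound.
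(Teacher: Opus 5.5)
The paper does not prove this lemma; it only quotes it from the cited source. Your route is the standard Freedman-type proof of this inequality: the exponential supermartingale $Z_t=\exp(\lambda M_t-\psi_K(\lambda)\langle M\rangle_t)$, stopping at the first passage above $a$, the Bernstein bound $\psi_K(\lambda)\le\lambda^2/(2(1-\lambda K))$, and the choice $\lambda=a/(aK+b^2)$. Steps 2 and 3 are correct as written. Step 1 does not go through as stated in full generality, and not for the reason you anticipated. Replacing $[M]$ by $\langle M\rangle$ is routine: $[M]-\langle M\rangle$ is a local martingale and $Z_-$ is predictable and locally bounded, so $\int Z_-\,\psi_K(\lambda)\,d([M]-\langle M\rangle)$ is a local martingale.

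The actual problem is that your It\^o computation treats $e^{-\psi_K(\lambda)\langle M\rangle}$ as continuous, letting only $M$ jump. For a general locally square integrable martingale, $\langle M\rangle$ jumps at predictable times $T$, with $d:=\Delta\langle M\rangle_T=\mathbb{E}[(\Delta M_T)^2\mid\mathcal{F}_{T-}]$. At such a time $Z_T/Z_{T-}-1=e^{\lambda\Delta M_T-\psi_K(\lambda)d}-1$. This is not dominated pathwise by your linearisation $\lambda\Delta M_T+\psi_K(\lambda)\bigl((\Delta M_T)^2-d\bigr)$: on $\{\Delta M_T=0\}$ the left side equals $e^{-\psi_K(\lambda)d}-1>-\psi_K(\lambda)d$. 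So your claimed drift bound $Z_-\psi_K(\lambda)(d[M]-d\langle M\rangle)$ fails at such times. The supermartingale property survives only through the conditional (not pathwise) estimate
\[
\mathbb{E}\bigl[e^{\lambda\Delta M_T}\mid\mathcal{F}_{T-}\bigr]\le 1+\psi_K(\lambda)\,\Delta\langle M\rangle_T\le e^{\psi_K(\lambda)\Delta\langle M\rangle_T},
\]
which is exactly Freedman's discrete-time step.

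A clean general fix goes as follows. Let $\nu$ be the compensator of the jump measure of $M$, and set $B=\tfrac{\lambda^2}{2}\langle M^c\rangle+(e^{\lambda x}-1-\lambda x)*\nu$. Then $e^{\lambda M}/\mathcal{E}(B)$ is a positive local martingale, hence a supermartingale. Moreover $B\le\psi_K(\lambda)\langle M\rangle$ as measures, since $\langle M\rangle=\langle M^c\rangle+x^2*\nu$ and $\nu$ charges only $\{|x|\le K\}$. Also $\mathcal{E}(B)\le e^{B}$. Hence $Z\le e^{\lambda M}/\mathcal{E}(B)$, and this domination is all Step 2 needs. If $M$ is quasi-left-continuous (so $\langle M\rangle$ is continuous), in particular if $M$ is continuous, your computation is fine as it stands.
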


First, the OM functional in \eqref{omf} will be simplified using the Girsanov transformation. Then, we will compute the functional for different ranges of \(H\) (i.e., \(0 < H < \frac{1}{2}\), \(\frac{1}{2} < H < 1\), and \(H = \frac{1}{2}\)).

Let 
\begin{equation}
    \tilde{Y_t}=\phi_t+\int_0^t \sigma_s\  dB^H_s,\quad \tilde{X}_t=x_0+\int_0^t  \tilde{Y}_s  \ ds. 
\end{equation}
We define new processes
\begin{equation}
     u_s=  \dot{\phi}_s 
        - (K_H^\sigma)^{-1}\!\left( \int_0^{\cdot} f_u(\tilde{X}_u,\tilde{Y}_u) \, du \right)(s) 
\end{equation}
and
\begin{equation}
    \tilde{W}_t 
    = W_t + \int_0^t 
   u_s ds. 
\end{equation}

Under the assumptions of this paper, the Novikov condition \eqref{novikov} is satisfied (a proof can be found in\cite{NUALART2002103} ). By Lemma~\ref{lem:girsanov}, 
there exists a probability measure $\tilde{\mathbb{P}}$, absolutely continuous with respect to $\mathbb{P}$,  
such that $\tilde{W}_t$ is a Brownian motion under $\tilde{\mathbb{P}}$.  
The corresponding fractional Brownian motion associated with $\tilde{W}_t$ is given by
\begin{equation*}
    \tilde{B}^H_t 
    = B^H_t 
    + \int_0^t \sigma_s^{-1} \, d\phi_s
    - \int_0^t \sigma_s^{-1} f_s(\tilde{X}_s,\tilde{Y_s}) \, ds.
\end{equation*}
Taking the differential of $\tilde{X}_t$ and $\tilde{Y}_t$, we have
\begin{equation*}
\begin{cases}
d\tilde{X}_t = \tilde{Y}_t dt, \\
d\tilde{Y}_t = f_t(\tilde{X}_t, \tilde{Y}_t) dt + \sigma_t d\tilde{B}^H_t.
\end{cases}
\end{equation*} 
This implies that $(\tilde{X}, \tilde{B}^H)$ is the strong solution of \eqref{fir} under the probability measure $\tilde{\mathbb{P}}$.
 Thus, the OM functional can be simplified as follows:  
\[
\begin{aligned}
&\frac{\mathbb{P}(\|X - \psi\|_{\beta+1} \leq \varepsilon)}{\mathbb{P}\left( \left\| \int_0^\cdot \sigma_u \, dB^H_u \right\|_\beta \leq \varepsilon \right)} \\
=& \frac{\tilde{\mathbb{P}}(\| \tilde{Y} - \phi \|_{\beta} \leq \varepsilon)}{\mathbb{P}\left( \left\| \int_0^\cdot \sigma_u \, dB^H_u \right\|_{\beta} \leq \varepsilon \right)} \\
=& \frac{\tilde{\mathbb{P}}\left( \left\| \int_0^\cdot \sigma_u \, dB^H_u \right\|_{\beta} \leq \varepsilon \right)}{\mathbb{P}\left( \left\| \int_0^\cdot \sigma_u \, dB^H_u \right\|_{\beta} \leq \varepsilon \right)} \\
=& \mathbb{E} \left[ \exp \left( -\int_0^1 u_s \, dW_s - \frac{1}{2} \int_0^1 u_s^2 \, ds \right) \,\middle|\, \left\| \int_0^\cdot \sigma_u \, dB^H_u \right\|_{\beta} \leq \varepsilon  \right].
\end{aligned}
\]

\subsection{Singular Case}
In this section we will compute the OM functional for $\frac{1}{4}< H<\frac{1}{2}.$
\begin{theorem} \label{result1} Let $X$ be the solution of \eqref{fir}, and assume that the coefficients satisfy Assumption \((A)\). 
For any $\phi=\psi' \in K_H^\sigma(L^2([0,1]))$ with $(\psi_0,\phi_0) = (x_0,y_0)$, when $ \tfrac{1}{4} < H < \tfrac{1}{2}$, 
the OM functional of $X_t$ with respect to the norms 
$\Vert \cdot \Vert_{\beta+1}$, where $H-\tfrac{1}{2}<\beta<H-\tfrac{1}{4}$,  can be expressed as  
\begin{equation*}
J(\psi)=-\dfrac{1}{2}\displaystyle\int_0^1 
\Big(
\dot{\phi}_s
- s^{-\alpha} I_{0^+}^\alpha s^{\alpha} 
\sigma_s^{-1} f_s(\psi_s,\phi_s)
\Big)^2 
+ d_H \partial_y f_s(\psi_s,\phi_s) \, ds,
\end{equation*}
where
\begin{equation*}
d_H = \sqrt{\frac{2H \,\Gamma\!\left(\tfrac{1}{2}+H\right)\Gamma\!\left(\tfrac{3}{2}-H\right)}{\Gamma(2-2H)}}.
\end{equation*}

\end{theorem}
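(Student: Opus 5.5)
Starting from the Girsanov reduction at the end of the preceding discussion, the OM ratio equals
\[
\mathbb{E}\Big[\exp\Big(-\int_0^1 u_s\,dW_s-\tfrac12\int_0^1u_s^2\,ds\Big)\,\Big|\,\|U^H\|_\beta\le\varepsilon\Big],\qquad U^H_t=\int_0^t\sigma_v\,dB^H_v .
\]
On the conditioning event we have $\tilde Y=\phi+U^H$ with $\|U^H\|_\beta\le\varepsilon$, hence also $\sup_t|\tilde X_t-\psi_t|\le\varepsilon$. I split
\[
u_s=\Big(\dot\phi_s-(K_H^\sigma)^{-1}\Big(\int_0^\cdot f_v(\psi_v,\phi_v)\,dv\Big)(s)\Big)
-(K_H^\sigma)^{-1}\Big(\int_0^\cdot\big[f_v(\tilde X_v,\tilde Y_v)-f_v(\psi_v,\phi_v)\big]\,dv\Big)(s)
=: a_s-b_s.
\]
Since $f$ is differentiable, I use \eqref{l1/2} composed with $\sigma^{-1}$. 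This gives $(K_H^\sigma)^{-1}(\int_0^\cdot g)=s^{-\alpha}I^\alpha_{0^+}s^\alpha\sigma_s^{-1}g_s$, with $\alpha=\tfrac12-H$ here. In particular $a$ is exactly the deterministic integrand appearing in the statement.

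\emph{Quadratic terms.} The term $\tfrac12\int a_s^2\,ds$ gives the main part of $J$. The remaining terms $\int a_sb_s\,ds$ and $\int b_s^2\,ds$ tend to zero uniformly on the event. The reason is that $|b_s|\le C s^{-\alpha}I^\alpha s^\alpha(|\tilde X-\psi|+|\tilde Y-\phi|)\le C\varepsilon$, using boundedness of $\partial_xf,\partial_yf$.

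\emph{Stochastic integral, deterministic and $x$-parts.} By Lemma~\ref{estimate1}, $\int a\,dW$ contributes a factor tending to $1$. Taylor expansion gives
\[
f_v(\tilde X_v,\tilde Y_v)-f_v(\psi_v,\phi_v)=\partial_xf\,(\tilde X_v-\psi_v)+\partial_yf\,U^H_v+R_v,
\qquad |R_v|\le C\varepsilon^2 .
\]
The remainder contributes $\int (K\text{-image of }R)\,dW$. I control it with the exponential martingale inequality, Lemma~\ref{exponential_ineq}: its quadratic variation is $O(\varepsilon^4)$, while the small ball probability \eqref{smbp} decays like $\exp(-C\varepsilon^{-1/(H-\beta)})$. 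The conditional exponential moments therefore tend to $1$ precisely when $4>1/(H-\beta)$, i.e.\ $\beta<H-\tfrac14$. This is where the hypotheses $H>\tfrac14$ and $\beta<H-\tfrac14$ enter. The $\partial_xf$ term involves $\tilde X-\psi=\int_0^\cdot U^H$, which is one order smoother, so it is handled by the same martingale estimate (quadratic variation $O(\varepsilon^2)$ but with extra smoothing), or alternatively treated as a double integral with zero trace, since its kernel vanishes on the diagonal.

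\emph{Main obstacle: the $\partial_y f$ term.} What remains is
\[
\int_0^1 \Big(s^{-\alpha}I^\alpha_{0^+}s^\alpha\sigma_s^{-1}\partial_yf_s(\psi_s,\phi_s)\,U^H_s\Big)\,dW_s .
\]
Writing $U^H_s=\int_0^s K_H^*(\sigma\mathbf 1_{[0,s]})(r)\,dW_r$ and applying the stochastic Fubini theorem, Lemma~\ref{fubini}, I express it as a double Wiener integral $\int_0^1\int_0^1 g(s,r)\,dW_r\,dW_s$ plus a trace correction. Here $g$ is symmetrized from the kernel $(s,r)\mapsto\big[s^{-\alpha}I^\alpha s^\alpha\sigma^{-1}\partial_yf\,K_H^*(\sigma\mathbf 1_{[0,\cdot]})\big]$. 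Lemma~\ref{estimate2} then yields the limit factor $e^{-\operatorname{Tr} g}$, and I need to prove two things:
\begin{enumerate}
\item nuclearity of $K(g)$, which I plan to obtain by factoring it as a product of two Hilbert--Schmidt operators built from fractional integrals;
\item that the trace equals $\tfrac12 d_H\int_0^1\partial_yf_s(\psi_s,\phi_s)\,ds$.
\end{enumerate}
The $\sigma$ factors cancel on the diagonal because $\sigma\in C^1$. The diagonal value of the kernel composition $I^\alpha$ with $K_H$ then reduces to a Beta-function integral, which should produce $d_H=\sqrt{2H\Gamma(\tfrac12+H)\Gamma(\tfrac32-H)/\Gamma(2-2H)}$. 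I expect this exact trace computation, together with justifying Fubini given the singular kernel near the diagonal, to be the hardest step.

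\emph{Combining.} The separate factors are combined via Hölder's inequality for conditional expectations: each piece converges to its limit for every power $c$. This yields $J(\psi)=-\tfrac12\int_0^1 \big(a_s^2+d_H\,\partial_yf_s(\psi_s,\phi_s)\big)\,ds$.
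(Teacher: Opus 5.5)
Your proposal follows essentially the same route as the paper: Girsanov on the non-degenerate component, Taylor expansion of the drift, the classical treatment of the deterministic, remainder and $\partial_y f$ terms (the last producing $d_H$ via the trace), and, for the new degenerate contribution $\partial_x f_s(\psi_s,\phi_s)(\tilde X_s-\psi_s)$, a stochastic-Fubini rewriting as a double Wiener integral whose kernel vanishes on the diagonal and so has zero trace. Only that last option works for the $\partial_x f$ term, so drop the martingale-estimate alternative: a quadratic-variation bound of order $\varepsilon^2$ cannot beat the small-ball rate $\exp(-M_\beta\varepsilon^{-1/(H-\beta)})$, since $1/(H-\beta)>2$ when $H<\tfrac12$.
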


\begin{proof}
For the parts identical to the classical setting, we omit the detailed proofs; further details can be found in \cite{Liu2026,Nualart}.
Having applied the Girsanov theorem, the remainder of the proof proceeds using standard arguments. By the Girsanov theorem, we obtain
\begin{align*}
\mathbb{P}(\Vert X-\psi\Vert_{\beta+1} \leq \varepsilon)
&= \mathbb{E}\Bigg[\exp\Bigg( -\int_0^1 u_s \, dW_s - \frac{1}{2} \int_0^1 u_s^2 \, ds\Bigg)
  I_{\big\{\Vert \int_0^\cdot \sigma_s \, dB^H_s\Vert_\beta \leq \varepsilon\big\}}\Bigg] \\
&= \mathbb{E}\Big(\exp(A_1+A_2)\,
  I_{\big\{\Vert \int_0^\cdot \sigma_s \, dB^H_s\Vert_\beta \leq \varepsilon\big\}}\Big),
\end{align*}
where
\begin{align*}
A_1 &= -\int_0^1 \Big(\dot{\phi}_s - s^{-\alpha} I_{0^+}^\alpha\big(s^{\alpha} \sigma_s^{-1}
      f_s(\tilde{X}_s,\tilde{Y}_s)\big)\Big)\, dW_s, \\
A_2 &= -\frac{1}{2}\int_0^1 \Big(\dot{\phi}_s - s^{-\alpha} I_{0^+}^\alpha\big(s^{\alpha} \sigma_s^{-1}
      f_s(\tilde{X}_s,\tilde{Y}_s)\big)\Big)^2 ds.
\end{align*}
Moreover, by
\begin{equation*}
\tilde{Y}_t = \phi_t + \int_0^t \sigma_s \, dB^H_s, 
\qquad 
\tilde{X}_t = \psi_t + \int_0^t ds \int_0^s \sigma_u \, dB^H_u,
\end{equation*}
it is straightforward to see that 
\[
(\tilde{X}_t,\tilde{Y}_t) \to (\psi_t,\phi_t)
\quad \text{as } \left\| \int_0^\cdot \sigma_u \, dB^H_u \right\|  \to 0.
\]
That implies
\[
\lim_{\varepsilon\to 0}
\mathbb{E}\Big(\exp(A_2)\,\Big|\,
\left\| \int_0^\cdot \sigma_u \, dB^H_u \right\| \leq \varepsilon \Big) = 1.
\]

 Next, it suffices to consider the limit
\begin{equation}
 \mathbb{E}\Big(\exp\left(\int_0^1  s^{-\alpha} I_{0^+}^\alpha s^{\alpha} \sigma_s^{-1}
      f_s(\tilde{X}_s,\tilde{Y}_s)\, dW_s\right)\,\Big|\,
\left\| \int_0^\cdot \sigma_u \, dB^H_u \right\| \leq \varepsilon \Big) .\label{taylor1}
\end{equation}
We perform a Taylor expansion in \eqref{taylor1}:
\begin{align*}
    s^{-\alpha} I_{0^+}^\alpha s^{\alpha} \sigma_s^{-1}
      f_s(\tilde{X}_s,\tilde{Y}_s) &= s^{-\alpha} I_{0^+}^\alpha s^{\alpha} \sigma_s^{-1}\Big(f_s(\psi_s, \phi_s) \\
     & \qquad + \partial_x f_s(\psi_s,\phi_s)\int_0^s dv \int_0^v \sigma_u dB^H_u \\
      & \qquad + \partial_y f_s(\psi_s,\phi_s)\int_0^s \sigma_u dB^H_u \Big) + R_s.
\end{align*}
However, unlike the classical case, an additional term involving the stochastic integral of the noise appears. Since integral inequalities do not hold in the context of stochastic integration, the analysis of its limiting behavior becomes non-trivial.

Here we only consider the term arising from the degeneracy of the noise compared to the classical equation:  
\begin{equation*}
    \lim_{\varepsilon\to 0}\mathbb{E} \left[ \exp \left( \int_0^1 \partial_x f_s(\psi_s,\phi_s)\int_0^s dv \int_0^v \sigma_u \,dB^H_u  \, dW_s \right) \,\middle|\, \left\| \int_0^\cdot \sigma_u \, dB^H_u \right\| \leq \varepsilon  \right].
\end{equation*}
Here
\begin{align*}
    &s^{-\alpha}I_{0^+}^\alpha s^{\alpha} \sigma_s^{-1}\partial_x f_s(\psi_s,\phi_s)\int_0^s dv \int_0^v \sigma_u dB^H_u  \\
    =& s^{-\alpha}I^\alpha_{0^+}s^{\alpha} \sigma_s^{-1}\partial_x f_s(\psi_s,\phi_s)\int_0^s \int_0^u \sigma_v dB^H_v du \\
    =& C s^{-\alpha}\int_0^s (s-r)^{\alpha-1} r^{\alpha} \sigma_r^{-1}\partial_x f_r(\psi_r,\phi_r)\left( \int_0^r   \int_0^u \sigma_v \, dB^H_v   du   \right)dr.
\end{align*}
where
\begin{align*}
    \int_0^u \sigma_v dB^H_v &= \int_0^u v^{\alpha} D_{u^-}^\alpha  v^{-\alpha}\sigma_v dW_v, \\
    &= C\int_0^u v^\alpha\left(  \frac{v^{-\alpha}\sigma_v}{(u-v)^\alpha} 
	- \alpha \int_v^u \frac{x^{-\alpha}\sigma_x - v^{-\alpha}\sigma_v}{(x-v)^{\alpha+1}} dx \right) dW_v.
\end{align*}
   We have
\begin{align*}
    \int_0^1  s^{-\alpha}I_{0^+}^\alpha s^{\alpha} \sigma_s^{-1}\partial_x f_s(\psi_s,\phi_s)(\tilde{X}_s-\psi_s) \, dW_s
    &= \int_0^1 \int_0^s g(s,v) \, dW_v \, dW_s \\
    &= \int_0^1 \int_0^s \left( g_1(s,v) + g_2(s,v) \right) \, dW_v \, dW_s,
\end{align*}
where
\begin{align*}
    g_1(s,v) &= C \int_v^s dr \int_v^r du \, (s-r)^{\alpha-1} r^\alpha \sigma_r^{-1} \partial_x f_r(\psi_r,\phi_r) \sigma_v (u-v)^{-\alpha}, \\
    g_2(s,v) &= C \int_v^s dr \int_v^r du \int_v^u dx \, (s-r)^{\alpha-1} r^\alpha \sigma_r^{-1} \partial_x f_r(\psi_s,\phi_r) v^\alpha \frac{x^{-\alpha}\sigma_x - v^{-\alpha}\sigma_v}{(x-v)^{\alpha+1}}.
\end{align*}

For \(g_1(s,v)\), we have
\begin{align*}
    |g_1(s,v)| &\leq C \int_v^s dr \int_v^r du \, (s-r)^{\alpha-1} r^\alpha (u-v)^{-\alpha} \\
    &\leq C \int_v^s (s-r)^{\alpha-1} r^\alpha (r-v)^{1-\alpha} \, dr \\
    &\leq C s^\alpha (s-v) \int_0^1 (1-y)^{\alpha-1} y^{1-\alpha} \, dy,
\end{align*}
thus, \(g_1(s,s) = 0\).

For \(g_2(s,v)\), we have
\begin{align*}
    |g_2(s,v)| &\leq C \int_v^s dr \int_v^r du \int_v^u dx \, (s-r)^{\alpha-1} r^\alpha v^\alpha \frac{x^{-\alpha}\sigma_x - v^{-\alpha}\sigma_v}{(x-v)^{\alpha+1}} \\
    &\leq C \int_v^s dr \int_v^r du \, (s-r)^{\alpha-1} r^\alpha v^\alpha \left( \int_v^u \frac{x^{-\alpha} - v^{-\alpha}}{(x-v)^{\alpha+1}} \, dx + (x-v)^{-\alpha} \right) \\
    &\leq C \int_v^s dr \int_v^r du\\
    &\qquad\cdot\, (s-r)^{\alpha-1} r^\alpha v^\alpha \left( v^{-2\alpha} \int_1^\infty (1-y^\alpha)y^{-\alpha} (y-1)^{-(\alpha+1)}  \, dy + (u-v)^{1-\alpha} \right) \\
    &\leq C \int_v^s dr \int_v^r du \, (s-r)^{\alpha-1} r^\alpha v^\alpha \left( v^{-2\alpha} + (u-v)^{1-\alpha} \right) \\
    &\leq C \int_v^s dr (s-r)^{\alpha-1} r^\alpha v^{-\alpha} (r-v) + (s-r)^{\alpha-1} r^\alpha v^\alpha (r-v)^{2-\alpha} \\
    &\leq C \left( s^\alpha v^{-\alpha} (s-v)^{\alpha+1} \int_0^1 (1-y)^{\alpha-1} y \, dy + (sv)^\alpha (s-v)^2 \right).
\end{align*}
Thus, \(g(s,s) = 0\).

According to Lemma~\ref{estimate2}, we have
\begin{equation*}
    \lim_{\varepsilon\to 0} \mathbb{E} \left[ \exp \left( \int_0^1 \partial_x f_s(\psi_s,\phi_s) \int_0^s dv \int_0^v \sigma_u dB^H_u \, dW_s \right) \,\middle|\, \left\| \int_0^\cdot \sigma_u \, dB^H_u \right\| \leq \varepsilon \right] = 0.
\end{equation*}

\end{proof}

    \subsection{Regular Case }
In this section we will compute the OM functional for $ \frac{1}{2}<H<1.$
\begin{theorem} \label{result2}  Let $X$ be the solution of \eqref{fir}, and assume that the coefficients satisfy Assumption \((A)\).
For any $\phi=\psi' \in K_H^\sigma(L^2([0,1]))$ with $(\psi_0,\phi_0) =(x_0,y_0)$, when $ \tfrac{1}{2} < H < 1$, 
the OM functional of $X_t$ with respect to the norms 
$\Vert \cdot \Vert_{\beta+1}$, where $H-\tfrac{1}{2}<\beta<H-\tfrac{1}{4}$, 
 can be expressed as 
	\begin{equation*}
	J(\psi)=-\frac{1}{2} \int_0^1 \left(\dot{\phi}_s-s^{\alpha}D_{0^+}^\alpha s^{-\alpha}  \sigma_s^{-1} f_s\left(\psi_s,\phi_s\right)\right)^2+d_H \partial_yf_s(\psi_s,\phi_s)ds,
\end{equation*}
where
\begin{equation*}
    d_H= \sqrt{\frac{2H\Gamma(H+1/2)\Gamma(3/2-H)}{\Gamma(2-2H)}}.
\end{equation*}

\end{theorem}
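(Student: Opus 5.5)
The proof will follow the same scheme as Theorem~\ref{result1}, with $(K_H)^{-1}=s^{\alpha}D^{\alpha}_{0^+}s^{-\alpha}\frac{d}{ds}$ from \eqref{g1/2} in place of the fractional integral. Starting from the Girsanov reduction at the start of this section, I write
\[
\frac{\mathbb{P}(\|X-\psi\|_{\beta+1}\le\varepsilon)}{\mathbb{P}(\|\int_0^\cdot\sigma_u dB^H_u\|_\beta\le\varepsilon)}
=\mathbb{E}\Big[\exp(A_1+A_2)\,\Big|\,\Big\|\int_0^\cdot\sigma_u\,dB^H_u\Big\|_\beta\le\varepsilon\Big],
\]
where now
\[
u_s=\dot\phi_s-s^{\alpha}D^{\alpha}_{0^+}\big(s^{-\alpha}\sigma_s^{-1}f_s(\tilde X_s,\tilde Y_s)\big).
\]
Before this reduction can be used, the Novikov condition must be checked, and here it is not automatic: the fractional derivative of $f_s(\tilde X_s,\tilde Y_s)$ involves Hölder increments of $\tilde Y$, i.e.\ of the fBm.

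By Weyl's formula \eqref{Weyl} and the Lipschitz assumption,
\[
|u_s|\le C+C\alpha L\,[\tilde Y]_\beta\,\frac{\Gamma(\beta-\alpha)}{\Gamma(1+\beta)}\cdots .
\]
So $\int_0^1u_s^2\,ds$ is bounded by a constant times $[\int\sigma\,dB^H]_\beta^2$, with a coefficient controlled by $M,\alpha,L$ and $\Gamma(\beta-\alpha)/\Gamma(1+\beta)$. Fernique's theorem (or the exponential-martingale bound of Lemma~\ref{exponential_ineq}) then gives exponential integrability precisely under inequality \eqref{assumpT}. This is where the extra hypothesis in Case (2) enters, and where $\beta>H-\frac12=\alpha$ is needed.

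Next I split the terms. The deterministic part $A_2$ converges to $-\frac12\int_0^1\big(\dot\phi_s-s^{\alpha}D^{\alpha}_{0^+}s^{-\alpha}\sigma_s^{-1}f_s(\psi_s,\phi_s)\big)^2ds$. For this it suffices that $s^{\alpha}D^{\alpha}_{0^+}s^{-\alpha}\sigma^{-1}f(\tilde X,\tilde Y)\to s^{\alpha}D^{\alpha}_{0^+}s^{-\alpha}\sigma^{-1}f(\psi,\phi)$ in $L^2$ on the small-ball event. That follows from Weyl's formula, because the $\beta$-Hölder seminorm of $\tilde Y-\phi$ is at most $\varepsilon$ and $\beta>\alpha$. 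The term $\int\dot\phi_s\,dW_s$ is handled by Lemma~\ref{estimate1}.

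For the remaining stochastic integral I Taylor expand $f_s(\tilde X_s,\tilde Y_s)$ around $(\psi_s,\phi_s)$. This produces four pieces:
\begin{itemize}
\item the zeroth-order term, handled again by Lemma~\ref{estimate1};
\item the $\partial_y f$ term, linear in $\int_0^s\sigma\,dB^H$;
\item the $\partial_x f$ term, linear in $\int_0^s\!\int_0^v\sigma\,dB^H$;
\item a remainder $R_s$ of order $\varepsilon^2$ in Hölder norm.
\end{itemize}
The remainder is negligible: its stochastic integral is a martingale with quadratic variation $O(\varepsilon^4)$, so Lemma~\ref{exponential_ineq} together with the small-ball lower bound \eqref{smbp} shows it contributes nothing, provided $\varepsilon^{4}\cdot\varepsilon^{-1/(H-\beta)}\to\infty$ is avoided. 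The restriction $\beta<H-\frac14$ is what makes this work.

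For the two linear pieces I write $\int_0^s\sigma\,dB^H=\int_0^s K_H^*(\sigma\mathbf 1_{[0,s]})(v)\,dW_v$ and use the stochastic Fubini theorem (Lemma~\ref{fubini}) to pass the fractional derivative and the time integrals inside. This turns each piece into a double Wiener integral $\int_0^1\!\int_0^s g(s,v)\,dW_v\,dW_s$. Lemma~\ref{estimate2} then gives the limit $e^{-\mathrm{Tr}(g)}$, where $\mathrm{Tr}(g)=\int_0^1 g(s,s)\,ds$ for the symmetrized kernel, once nuclearity is shown.

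For the $\partial_x f$ piece, the extra time integration smooths the kernel. I expect $|g(s,v)|\le C(s-v)^{1-\alpha}$-type bounds, as in the $g_1,g_2$ estimates of Theorem~\ref{result1}, which give $g(s,s)=0$ and hence trace zero. For the $\partial_y f$ piece, the kernel is $s^{\alpha}D^{\alpha}_{0^+}\big(r^{-\alpha}\sigma_r^{-1}\partial_yf_r\,\sigma_\cdot K_H^*\cdots\big)$. Its diagonal value is computed exactly as in the constant-$\sigma$ case of \cite{Nualart,Liu2026}: the factors $\sigma_r^{-1}$ and $\sigma_v$ cancel on the diagonal, leaving $\frac{d_H}{2}\partial_yf_s(\psi_s,\phi_s)$. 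This yields the $d_H$ term.

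The main obstacle is proving that these kernels define nuclear operators and computing the diagonal of the $\partial_y f$ kernel. Because $D^\alpha_{0^+}$ is a derivative, the kernel is singular near the diagonal, and the increments $\sigma_x-\sigma_v$ from the time-dependent intensity must be shown to produce only trace-zero contributions. I would first try to bound them by $C(x-v)$ using $\sigma\in C^1$, which gains one order against the singularity $(x-v)^{-\alpha-1}$. I would then treat the resulting kernels as products of Hilbert--Schmidt operators, which makes them nuclear.
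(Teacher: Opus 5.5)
Your proposal is correct and takes essentially the same route as the paper. Both use Girsanov on the $Y$-component and a Taylor expansion of $f_s(\tilde X_s,\tilde Y_s)$, then stochastic Fubini to turn the linear terms into double Wiener integrals, and Lemma~\ref{estimate2} with a kernel vanishing on the diagonal to kill the $\partial_x f$ term, while the $d_H$ term, the remainder and the Novikov condition are inherited from the classical arguments. The paper's actual work is the explicit Weyl-formula split of the $\partial_x f$ term into $g_1$ and $K_1,K_2,K_3$ (kernels $g_2,\dots,g_5$, each shown to vanish at $v=s$), which you only anticipate; in exchange, you sketch a Fernique-type Novikov check explaining the role of \eqref{assumpT}, which the paper simply cites.
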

\begin{proof}
  The proof in the regular case follows the same strategy.  The  difference lies in the computations involving fractional integrals and fractional derivatives. Hence, we proceed directly to the main part of the argument
\begin{align*}
&\int_0^1 s^{\alpha}D^\alpha_{0^+} s^{-\alpha} \sigma_s^{-1} \partial_x f_s(\psi_s, \phi_s)(\tilde{X}_s - \psi_s) \, dW_s\\
    =& \int_0^1 s^\alpha \Bigg[ s^{-2\alpha} \sigma_s^{-1} \partial_x f_s(\psi_s, \phi_s) \int_0^s du \int_0^u dB^H_v \sigma_v \\
    &\qquad\qquad + \alpha \int_0^s dr\ (s - r)^{-(\alpha + 1)} s^{-\alpha} \sigma_s^{-1} \partial_x f_s(\psi_s, \phi_s) \int_0^s du \int_0^u dB^H_v \sigma_v  \\
     &\qquad\qquad - \alpha \int_0^s (s - r)^{-(\alpha + 1)} \, dr\ r^{-\alpha} \sigma_r^{-1} \partial_x f_r(\psi_r, \phi_r) \int_0^r du \int_0^u dB^H_v \sigma_v  \Bigg] dW_s \\
    =& \int_0^1 \int_0^s g_1(s,v) \, dW_v \, dW_s + I_1,
\end{align*}
where
\begin{align*}
    g_1(s,v) &= \int_v^s du \int_v^u (x-v)^{\alpha-1} x^\alpha \sigma_x v^{-\alpha} s^{-\alpha} \sigma_s^{-1} \partial_x f_s(\psi_s, \phi_s) \, dx
\end{align*}
and
\begin{align*}
      &I_1\\
      &= \alpha \int_0^1 s^\alpha \, dW_s \\
    &\cdot\Bigg( \int_0^s \frac{s^{-\alpha} \sigma_s^{-1} \partial_x f_s(\psi_s, \phi_s) \int_0^s du \int_0^u dB^H_v \sigma_v - r^{-\alpha} \sigma_r^{-1} \partial_x f_r(\psi_r, \phi_r) \int_0^r du \int_0^u dB^H_v \sigma_v}{(s - r)^{\alpha + 1}} \, dr\Bigg).
\end{align*}
We have
\begin{align*}
    |g_1(s,v)| &\leq C \int_v^s du \, s^{-\alpha} v^{-\alpha} \int_v^u (x-v)^{\alpha-1} x^\alpha \, dx \\
    &\leq C \int_v^s du \, s^{-\alpha} v^{-\alpha} u^\alpha (u-v)^\alpha \, du \\
    &\leq C v^{-\alpha} (s-v)^{\alpha+1},
\end{align*}
which means that \( g_1(s,s) = 0 \).

Next, we divide \( I_1 \) into three parts: \( K_1 \), \( K_2 \), and \( K_3 \). Then
\begin{align*}
    K_1 &= C \int_0^1 s^\alpha \, dW_s \\
    &\quad \cdot \int_0^s \frac{s^{-\alpha} \sigma_s^{-1} \partial_x f_s(\psi_s, \phi_s) \int_0^s du \int_0^u dB^H_v \sigma_v - r^{-\alpha} \sigma_r^{-1} \partial_x f_r(\psi_r, \phi_r) \int_0^r du \int_0^u dB^H_v \sigma_v}{(s - r)^{\alpha + 1}} \, dr \\
    &= C \int_0^1 s^\alpha \partial_x f_s(\psi_s, \phi_s) \left( \int_0^s du \int_0^u \sigma_v \, dB^H_v \right) \left( \int_0^s \frac{s^{-\alpha} - r^{-\alpha}}{(s - r)^{\alpha + 1}} \, dr \right) dW_s \\
    &= C \int_0^1 s^\alpha \sigma_s^{-1} \partial_x f_s(\psi_s, \phi_s) \\
    &\qquad\qquad \cdot \left( \int_0^s du \int_0^u v^{-\alpha} \, dW_v \int_v^u (x-v)^{\alpha-1} x^\alpha \sigma_x \, dx \right) \left( \int_0^s \frac{s^{-\alpha} - r^{-\alpha}}{(s - r)^{\alpha + 1}} \, dr \right) dW_s \\
    &= C \int_0^1 dW_s \int_0^s dW_v \, g_2(s,v),
\end{align*}
where
\[
    g_2(s,v) = \int_v^s s^\alpha \sigma_s^{-1} \partial_x f_s(\psi_s, \phi_s) v^{-\alpha} \left( \int_v^u (x-v)^{\alpha-1} x^\alpha \sigma_x \, dx \right) \left( \int_0^s \frac{s^{-\alpha} - r^{-\alpha}}{(s - r)^{\alpha + 1}} \, dr \right) du.
\]

    We have
    \begin{align*}
        |g_2(s,v)|&\leq C \int_v^s s^{-\alpha} v^{-\alpha}  u^\alpha (u-v)^\alpha \left( \int_0^1 (1-y^{-\alpha})(1-y)^{-\alpha-1} dy\right)  du\\
        &\leq C   v^{-\alpha}(s-v)^{\alpha+1},
    \end{align*}
    so we have $g_2(s,s)=0.$ Note that
      \begin{align*}
        &\ K_2\\
        &=C\int_0^1 s^\alpha dW_s \int_0^s \frac{r^{-\alpha}\sigma_s^{-1}\partial_x f_s(\psi_s,\phi_s)\int_0^s du\int_0^u dB^H_v \sigma_v}{(s-r)^{\alpha+1}}dr\\
        &\quad-C\int_0^1 s^\alpha dW_s \int_0^s \frac{r^{-\alpha}\sigma_r^{-1}\partial_x f_r(\psi_r,\phi_r)\int_0^s du\int_0^u dB^H_v \sigma_v }{(s-r)^{\alpha+1}}dr\\
        &= C\int_0^1 s^\alpha  \left( \int_0^sdu \int_0^u \sigma_v dB^H_v\right) \left(\int_0^s  \frac{r^{-\alpha}\sigma_s^{-1}\partial_x f_s(\psi_s,\phi_s)-r^{-\alpha}\sigma_r^{-1}\partial_x f_r(\psi_r,\phi_r)}{(s-r)^{\alpha+1}} dr\right)dW_s   \\
        &=C \int_0^1 s^\alpha \left(\int_0^sdu \int_0^u v^{-\alpha}dW_v\int_v^u (x-v)^{\alpha-1}x^\alpha \sigma_x dx    \right)\\
        &\qquad\qquad\left(\int_0^s  \frac{r^{-\alpha}\sigma_s^{-1}\partial_x f_s(\psi_s,\phi_s)-r^{-\alpha}\sigma_r^{-1}\partial_x f_r(\psi_r,\phi_r)}{(s-r)^{\alpha+1}} dr\right)dW_s \\
        &=C\int_0^1 dW_s \int_0^s dW_v g_3(s,v),
    \end{align*}
where
\begin{align*}
    g_3(s,v)&=C\int_v^s      s^\alpha  v^{-\alpha}   \left( \int_v^u (x-v)^{\alpha-1}x^\alpha \sigma_x dx \right)\\
    &\qquad\qquad\cdot\left(\int_0^s  \frac{r^{-\alpha}\sigma_s^{-1}\partial_x f_s(\psi_s,\phi_s)-r^{-\alpha}\sigma_r^{-1}\partial_x f_r(\psi_r,\phi_r)}{(s-r)^{\alpha+1}} dr\right)du.
\end{align*}
   We have
\begin{align*}
    |g_3(s,v)| &\leq C \int_v^s s^\alpha v^{-\alpha} \left( \int_v^u (x-v)^{\alpha-1} x^\alpha \sigma_x \, dx \right) \\
    &\qquad\qquad \cdot \left( \int_0^s \frac{r^{-\alpha} \sigma_s^{-1} \partial_x f_s(\psi_s, \phi_s) - r^{-\alpha} \sigma_r^{-1} \partial_x f_r(\psi_r, \phi_r)}{(s - r)^{\alpha + 1}} \, dr \right) du \\
    &\leq C \int_v^s s^\alpha v^{-\alpha} u^\alpha (u - v)^\alpha \left( \int_v^u (x - v)^{\alpha - 1} x^\alpha \sigma_x \, dx \right) \\
    &\qquad\qquad \cdot \left( \int_0^s \frac{r^{-\alpha} (|s - r| + |\psi_s - \psi_r| + |\phi_s - \phi_r|)}{(s - r)^{\alpha + 1}} \, dr \right) du.
\end{align*}

We can follow the same approach to show that \( g_3(s, s) = 0 \). Observe that
\begin{align*}
    &K_3 \\
    &= C \int_0^1 s^\alpha \, dW_s\\
&\quad\cdot\int_0^s \frac{r^{-\alpha} \sigma_r^{-1} \partial_x f_r(\psi_r, \phi_r) \int_0^s du \int_0^u dB^H_v \sigma_v - r^{-\alpha} \sigma_r^{-1} \partial_x f_r(\psi_r, \phi_r) \int_0^r du \int_0^u dB^H_v \sigma_v}{(s - r)^{\alpha + 1}} \, dr \\
    &= C \int_0^1 s^\alpha \, dW_s \int_0^s \frac{r^{-\alpha} \sigma_r^{-1} \partial_x f_r(\psi_r, \phi_r) \int_r^s du \int_0^u dB^H_v \sigma_v}{(s - r)^{\alpha + 1}} \, dr \\
    &= C \int_0^1 dW_s \int_0^s dW_v \, g_4(s,v) + g_5(s,v).
\end{align*}

For \( |g_4(s,v)| \), we have:
\begin{align*}
    |g_4(s,v)| &= C \left| \int_v^s dr \int_r^s du \int_v^u s^\alpha \frac{r^{-\alpha} \sigma_r^{-1} \partial_x f_r(\psi_r, \phi_r) v^{-\alpha} (x - v)^{\alpha - 1} x^\alpha \sigma_x}{(s - r)^{\alpha + 1}} \, dx \right| \\
    &\leq C \left| \int_v^s dr \int_r^s s^\alpha \frac{r^{-\alpha} v^{-\alpha} (u - v)^{\alpha} u^\alpha}{(s - r)^{\alpha + 1}} \, du \right| \\
    &\leq C \left| v^{-\alpha} s^{2\alpha} \int_v^s \frac{r^{-\alpha} \left( (s - v)^{\alpha + 1} - (r - v)^{\alpha + 1} \right)}{(s - r)^{\alpha + 1}} \, dr \right| \\
    &\leq C \left| v^{-\alpha} (s - v)^{\alpha} s^{2\alpha} \int_v^s r^{-\alpha} (s - r)^{-\alpha} \, dr \right| \\
    &\leq C v^{-\alpha} (s - v)^\alpha s^{2\alpha}.
\end{align*}
Thus, \( g_4(s,s) = 0 \).

For \( |g_5(s,v)| \), we have:
\begin{align*}
    |g_5(s,v)| &= C \left| \int_0^v dr \int_r^s du \int_v^u s^\alpha \frac{r^{-\alpha} \sigma_r^{-1} \partial_x f_r(\psi_r, \phi_r) v^{-\alpha} (x - v)^{\alpha - 1} x^\alpha \sigma_x}{(s - r)^{\alpha + 1}} \, dx \right| \\
    &\leq C \left| \int_0^v dr \int_r^s du \, s^\alpha \frac{r^{-\alpha} v^{-\alpha} (x - v)^{\alpha - 1} x^\alpha}{(s - r)^{\alpha + 1}} \, dx \right| \\
    &\leq C \left| \int_0^v dr \int_r^s du \, s^\alpha \frac{r^{-\alpha} v^{-\alpha} (u - v)^\alpha u^\alpha}{(s - r)^{\alpha + 1}} \right| \\
    &\leq C \left| \int_0^v dr \int_r^s du \, s^\alpha \frac{r^{-\alpha} v^{-\alpha} (s - v)^\alpha (s^{\alpha + 1} - r^{\alpha + 1})}{(s - r)^{\alpha + 1}} \right| \\
    &\leq C \left| s^{1-\alpha} (s - v)^\alpha \int_0^1 y^{-\alpha} (1 - y)^{-\alpha} \, dy \right| \\
    &\leq C s^{1-\alpha} (s - v)^\alpha.
\end{align*}
Therefore, \( g_5(s,s) = 0 \).

According to Lemma~\ref{estimate2}, we have
\begin{equation*}
    \lim_{\varepsilon\to 0} \mathbb{E} \left[ \exp \left( \int_0^1 \partial_x f_s(\psi_s,\phi_s) \int_0^s dv \int_0^v \sigma_u dB^H_u \, dW_s \right) \,\middle|\, \left\| \int_0^\cdot \sigma_u \, dB^H_u \right\| \leq \varepsilon \right] = 0.
\end{equation*}
\end{proof}

    \subsection{Standard Case}
    We present the result of the standard case directly.
    \begin{theorem}  \label{result3}
Let $X$ be the solution of \eqref{fir}, and assume that the coefficients satisfy Assumption \((A)\).
For any $\phi=\psi' \in K_H^\sigma(L^2([0,1]))$ with $(\psi_0,\phi_0) =(x_0,y_0)$, when $  H =1/2$, the OM functional of $X$ with respect to the norms
$\Vert \cdot \Vert_{\beta+1}$ with $0<\beta<\frac{1}{2}-\frac{1}{2n}$ can be expressed as follows:
	\begin{equation*}
	J(\psi)=-\frac{1}{2} \int_0^1 \left(\frac{\phi_s' - f_s\left(\psi_s,\phi_s\right)}{\sigma_s}\right)^2+ \partial_yf_s(\psi_s,\phi_s)ds.
	\end{equation*}
    \end{theorem}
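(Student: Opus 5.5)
For $H=\tfrac12$ we have $\alpha=0$, $B^H=W$ and $K_H=I^1_{0^+}$. Hence $K_H^\sigma f=\int_0^\cdot \sigma_s f_s\,ds$ and $\dot\phi=\phi'/\sigma$. I would follow the scheme already used for Theorems~\ref{result1} and \ref{result2}; the fractional operators disappear here, which makes every step explicit.

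\textbf{Step 1: reduction.} Apply the Girsanov reduction derived above with
\[
u_s=\sigma_s^{-1}\bigl(\phi'_s-f_s(\tilde X_s,\tilde Y_s)\bigr),\qquad \tilde Y_t=\phi_t+\int_0^t\sigma_v\,dW_v,\qquad \tilde X_t=\psi_t+\int_0^t\int_0^s\sigma_v\,dW_v\,ds .
\]
The ratio in \eqref{omf} then equals
\[
\mathbb E\Bigl[\exp\bigl(-\textstyle\int_0^1u_s\,dW_s-\tfrac12\int_0^1u_s^2\,ds\bigr)\,\Big|\,\|\textstyle\int_0^\cdot\sigma\,dW\|_\beta\le\varepsilon\Bigr].
\]
On the conditioning event, $\tilde X\to\psi$ and $\tilde Y\to\phi$ uniformly. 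Since $f$ is bounded and continuous, the quadratic term converges deterministically to
\[
-\tfrac12\int_0^1\bigl(\phi'_s-f_s(\psi_s,\phi_s)\bigr)^2/\sigma_s^2\,ds .
\]
The term $\int_0^1\sigma_s^{-1}\phi'_s\,dW_s$ is a Wiener integral of an $L^2$ function, so its conditional exponential moment tends to $1$ by Lemma~\ref{estimate1}. By Hölder's inequality on conditional expectations, it then suffices to show that each remaining exponential factor has conditional limit $1$ or the claimed constant, for every multiple $c\in\mathbb R$ of the exponent.

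\textbf{Step 2: Taylor expansion.} Expand
\[
\int_0^1\sigma_s^{-1}f_s(\tilde X_s,\tilde Y_s)\,dW_s
\]
around $(\psi_s,\phi_s)$. This produces four pieces:
\begin{itemize}
\item the Wiener integral $\int\sigma^{-1}f(\psi,\phi)\,dW$, handled by Lemma~\ref{estimate1};
\item the $\partial_y$ term $\int_0^1\sigma_s^{-1}\partial_yf_s(\psi_s,\phi_s)\int_0^s\sigma_v\,dW_v\,dW_s$;
\item the $\partial_x$ term $\int_0^1\sigma_s^{-1}\partial_xf_s\int_0^s\!\int_0^r\sigma_v\,dW_v\,dr\,dW_s$;
\item a second-order remainder $R$.
\end{itemize}

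\emph{The $\partial_y$ term} is a double Wiener integral $\int\!\!\int_{v<s}g(s,v)\,dW_v\,dW_s$ with the continuous kernel $g(s,v)=\sigma_s^{-1}\partial_yf_s(\psi_s,\phi_s)\sigma_v$. Symmetrize it, $\tilde g=\tfrac12(g(s,v)+g(v,s))$, so that it equals $\int\!\!\int\tilde g\,dW\,dW$. Nuclearity of $K(\tilde g)$ follows from the $C^1$ regularity of $\sigma$ and of $s\mapsto\partial_yf_s(\psi_s,\phi_s)$, which holds because $\psi\in C^2$ and $\phi\in C^1$ when $\phi'\in L^2$. If that regularity is not quite enough, I would approximate $\partial_y f(\psi,\phi)$ by smooth functions, controlling the error with Lemma~\ref{exponential_ineq}. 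Lemma~\ref{estimate2} then gives the limit $\exp(-\operatorname{Tr}\tilde g)$ with
\[
\operatorname{Tr}\tilde g=\int_0^1\tilde g(s,s)\,ds=\tfrac12\int_0^1\partial_yf_s(\psi_s,\phi_s)\,ds ,
\]
which is exactly the $\partial_y$ contribution in $J$; the factor $\sigma_s^{-1}\sigma_s$ cancels on the diagonal.

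\emph{The $\partial_x$ term.} Use Lemma~\ref{fubini} to rewrite it as $\int_0^1\!\int_0^s g_x(s,v)\,dW_v\,dW_s$ with
\[
g_x(s,v)=\sigma_s^{-1}\partial_xf_s(\psi_s,\phi_s)\,\sigma_v\,(s-v).
\]
This kernel vanishes on the diagonal, so Lemma~\ref{estimate2} gives the limit $1$, exactly as for $g_1$ in the singular case.

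\textbf{Step 3: the remainder (main obstacle).} The remainder
\[
R=\int_0^1\sigma_s^{-1}r_s\,dW_s,\qquad |r_s|\le C\bigl(|\tilde X_s-\psi_s|^2+|\tilde Y_s-\phi_s|^2\bigr)\le C\varepsilon^2 ,
\]
is a stochastic integral rather than a Lebesgue integral, so pathwise bounds do not apply directly. I would treat it with the exponential martingale inequality, Lemma~\ref{exponential_ineq}. Stop the martingale $M_t=\int_0^t\sigma_s^{-1}r_s\,dW_s$ at the exit time from the tube; on the event the quadratic variation satisfies $\langle M\rangle_1\le C\varepsilon^4$. Then
\[
\mathbb P\bigl(|M_1|>\delta,\ \|\cdot\|_\beta\le\varepsilon\bigr)\le\exp(-c\delta^2\varepsilon^{-4}).
\]
Dividing by the small-ball lower bound \eqref{smbp}, of order $\exp(-M\varepsilon^{-2/(1-2\beta)})$, gives a ratio tending to $0$ because $\beta<\tfrac12-\tfrac1{2n}<\tfrac12$ makes $2/(1-2\beta)<4$. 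From this, $\limsup\mathbb E[e^{cR}\mid\cdot]\le e^{|c|\delta}$ for every $\delta$, hence the limit is $1$.

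\textbf{Conclusion.} Combine the factors through the Hölder argument of Step 1. The result is
\[
J(\psi)=-\tfrac12\int_0^1\Bigl(\frac{\phi'_s-f_s(\psi_s,\phi_s)}{\sigma_s}\Bigr)^2+\partial_yf_s(\psi_s,\phi_s)\,ds .
\]
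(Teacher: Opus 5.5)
Your route is the paper's: Girsanov on the $Y$-component, Taylor expansion, stochastic Fubini for the $\partial_x$ term, and the trace lemma for the $\partial_y$ term. The $\partial_x$ step is sound, since its kernel $\sigma_s^{-1}\partial_xf_s(\psi_s,\phi_s)\sigma_v(s-v)_+$ vanishes on the diagonal. Two of your other justifications, however, are wrong.

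\textbf{The remainder.} The exponent comparison in Step 3 is false. The bound $\exp(-c\delta^2\varepsilon^{-4})$ beats the lower bound $\exp(-M\varepsilon^{-2/(1-2\beta)})$ from \eqref{smbp} iff $2/(1-2\beta)<4$, i.e.\ iff $\beta<\tfrac14$; $\beta<\tfrac12$ is not enough. So your argument covers only $0<\beta<\tfrac14$. This is the $H=\tfrac12$ value of the restriction $\beta<H-\tfrac14$ in Theorems~\ref{result1} and \ref{result2}, i.e.\ $n=2$ in the stated range. For $\tfrac14\le\beta<\tfrac12-\tfrac1{2n}$ the crude bound $\langle M\rangle_1\le C\varepsilon^4$ does not suffice. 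You would need either to restrict $\beta$ or to find a genuinely finer estimate of $R$.

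\textbf{The $\partial_y$ term.} $\phi'\in L^2$ gives only $\phi\in W^{1,2}$, not $\phi\in C^1$. More seriously, Assumption (A) gives only continuity of $f$ in $t$. So $k_s:=\sigma_s^{-2}\partial_yf_s(\psi_s,\phi_s)$ is merely continuous, and nuclearity of the symmetrized kernel $\tfrac12\sigma_s\sigma_v\,k_{s\vee v}$ is not automatic.

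Your fallback does not repair this. Write $U=\int_0^\cdot\sigma\,dW$. The approximation error $\int_0^1(c_s-c^n_s)U_s\,dW_s$ is only linear in $U$, so on the event its bracket is at most $\eta^2\varepsilon^2$. Lemma~\ref{exponential_ineq} then gives $\exp(-\delta^2/(2\eta^2\varepsilon^2))$, which loses to $\exp(-M\varepsilon^{-2/(1-2\beta)})$ for every $\beta>0$.

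A working fix is to assume $k$ has bounded variation, e.g.\ $\partial_yf$ Lipschitz in $t$ (then $\phi\in W^{1,2}$ suffices). Apply It\^o's formula with $dU=\sigma\,dW$:
\[
\int_0^1\sigma_s^{-1}\partial_yf_s(\psi_s,\phi_s)\,U_s\,dW_s=\tfrac12k_1U_1^2-\tfrac12\int_0^1U_s^2\,dk_s-\tfrac12\int_0^1\partial_yf_s(\psi_s,\phi_s)\,ds .
\]
On $\{\|U\|_\beta\le\varepsilon\}$ the first two terms are $O(\varepsilon^2)$ pathwise. This yields the factor $\exp(-\tfrac12\int_0^1\partial_yf_s(\psi_s,\phi_s)\,ds)$ directly, without Lemma~\ref{estimate2}.

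The paper omits this term as classical, so it does not settle the regularity issue either. Still, the reasons you give for this step are incorrect.
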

    \begin{proof}
       The proof in this case follows the same line as those for the previous two cases. Here, all fractional integrals and derivatives reduce to their standard counterparts, which substantially simplifies the argument.

    \end{proof}

   \begin{remark}
    For the standard  case, our result can be extended to the system
    \begin{equation}
        \begin{cases}
            dX_t = g_t(X_t,Y_t) \, dt, \\[4pt]
            dY_t = f_t(X_t,Y_t) \, dt + \sigma_t \, dW_t.
        \end{cases}
    \end{equation}
    This extension is feasible because the following estimate holds:
    \begin{align*}
        &\int_0^1 f_s(\psi_s,\phi_s)(X_s-\psi_s)\,dW_s \\
        =&\int_0^1 \frac{f_s(\psi_s,\phi_s)}{\sigma_s}(X_s-\psi_s)\,
           d\!\left(\int_0^s \sigma_u\,dW_u\right)\\
        =&\; \left(\int_0^1 \sigma_u\,dW_u\right)
           \frac{f_1(\psi_1,\phi_1)}{\sigma_1}(X_1-\psi_1) \\
          &-\int_0^1\!\left(\int_0^s \sigma_u\,dW_u\right)(X_s-\psi_s)\,
           \frac{d}{ds}\!\left(\frac{f_s(\psi_s,\phi_s)}{\sigma_s}\right)ds \\
          &-\int_0^1\!\left(\int_0^s \sigma_u\,dW_u\right)\!\bigl(
           g_s(X_s,Y_s)-g_s(\psi_s,\phi_s)\bigr)
           \frac{f_s(\psi_s,\phi_s)}{\sigma_s}\,ds \\
        &\to\;0 \qquad \text{as } \biggl\|\int_0^{\cdot} \sigma_u\,dW_u\biggr\|\to 0.
    \end{align*}
\end{remark}

\section*{Statements and Declarations}

\subsection*{Competing Interests}
The authors declare that they have no competing financial or non-financial interests that are directly or indirectly related to the work submitted for publication.

\section*{Data Availability Statement}
The numerical simulations in this study were generated for illustrative purposes. 
No supporting datasets are publicly available.

\bibliographystyle{plain}
\bibliography{math.bib}

\end{document}